\newcommand{\ep}{\varepsilon}
\newtheorem{theorem}{Theorem}[section]
\newtheorem{Def}[theorem]{Definition}
\newtheorem{thm}[theorem]{Theorem}
\newtheorem{cor}[theorem]{Corollary}
\newtheorem{lemma}[theorem]{Lemma}
\newtheorem{hyp}[theorem]{Hypothesis}
\theoremstyle{remark}
\newtheorem{remark}[theorem]{Remark}
\newtheorem{notation}[theorem]{Notation}
\numberwithin{equation}{section}
\def\RR{\mathbb{R}}
\def\mE{\mathbb{E}}
\def\EE{\mathbb{E}}
\def\PP {\mathbb{P}}
\newcommand{\cf}{{\mathcal F}}
\newcommand{\cg}{{\mathcal G}}
\newcommand{\cn}{{\mathcal N}}
\newcommand{\cp}{{\mathcal P}}
\def\si{\sigma}
\def\al{{\alpha}}
\def\ga{{\gamma}}
 \def\Si{{\Sigma}}
 \newcommand{\ka}{\kappa}
\newcommand{\lp}{\left(}
\newcommand{\rp}{\right)}
\newcommand{\lc}{\left[}
\newcommand{\rc}{\right]}
\def \eref#1{\hbox{(\ref{#1})}}
 \def\wt{\tilde}
\def \eref#1{\hbox{(\ref{#1})}}
\begin{document}
\title[Anticipative nonlinear filtering]
{On the anticipative nonlinear filtering problem \\ and its stability}
\date{}

\author[G. Lin]{Guang Lin}\author[Y. Liu]{Yanghui Liu}\author[S. Tindel]{Samy Tindel}

\keywords{Nonlinear filtering, anticipative systems, asymptotic stability, Volterra-type integral equations. }

 \address{Guang Lin: Department of Mathematics,
Purdue University,
150 N. University Street,
W. Lafayette, IN 47907,
USA.}
\email{guanglin@purdue.edu} 
\address{Yanghui Liu: Department of Mathematics,
Purdue University,
150 N. University Street,
W. Lafayette, IN 47907,
USA.}
\email{liu2048@purdue.edu}
 \address{Samy Tindel: Department of Mathematics,
Purdue University,
150 N. University Street,
W. Lafayette, IN 47907,
USA.}
\email{stindel@purdue.edu}

 \begin{abstract}
 In this paper, we consider an anticipative nonlinear filtering problem, in which the observation noise is correlated with the past of the signal. This new signal-observation model has its applications in both finance models with insider trading and in engineering. We derive a new equation for the filter in this context, analyzing  both the nonlinear and the linear cases. We also handle the case of a finite filter with Volterra type observation. The performance of our algorithm is presented through numerical experiments.
\end{abstract}

\maketitle

{
\hypersetup{linkcolor=black}
 \tableofcontents
}

\section{Introduction}

In its most classical setting, the filtering problem can be summarized as follows: let $(\Omega, \cf, \PP)$ be a complete probability space, let $(\cf_t, \,t \geq 0)$ be an increasing family of sub $\sigma$-fields of $\cf$, and suppose that all $\PP$-null sets belong to $\cf_0$. We assume the existence of an underlying signal process $(X_t)_{t\geq 0}$ with values in  $\RR^{m}$ which can not be observed directly, and we are given an observation process $(Z_t)_{t\geq 0}$ with values in $\RR^{n}$ which is related to $(X_t)_{t\geq 0}$ and disturbed by the noise process $(N_t)_{t\geq 0}$. A main task in the   filtering theory is to estimate the signal process $(X_t )_{t\geq 0}$ based on the $(Z_{t})_{t\geq 0}$.  To give an example close enough to the situation which will be handled in the current paper, a model for the dynamics of both $X$ and $Z$ can be given as:
\begin{eqnarray}\label{eq:filtering-intro}
X_t&=&  X_{0} + \int^t_0 a(X_s)ds + W_t,\\
 Z_t &=& \int^t_0 h(X_s) ds + N_t, \notag
\end{eqnarray}
where $X$ and $Z$ are respectively the signal and the observation process, and $W$ and $N$ are  standard Brownian motions (or martingales), both adapted to the filtration $\cf_t$. In \eqref{eq:filtering-intro}, we also assume that the noises  $W$ and $N$ are decorrelated for convenience.

We refer to    \cite[Chapter 2-3]{Handbook}, \cite[Chapter 8]{K} and \cite{LS}  for a detailed account on the classical model \eqref{eq:filtering-intro} in full generality. Although we cannot give a real overview of the vast literature on filtering in this short introduction,  let us mention that a first basic step in order to solve the nonlinear filtering problem \eref{eq:filtering-intro} is to obtain an equation for the unnormalized conditional density of the observation $X_{t}$ given $Z$. This equation turns out to be a linear stochastic PDE called  Zakai's equation (see \cite{Zakai}),     which is one of the most classical objects studied in stochastic analysis. Then the numerical methods either rely on a particle representation of Zakai's equation (see \cite{DM, KX})  or on a direct discretization of Zakai's equation \cite{FL, Loto}. The reader is sent to \cite{Handbook} for the abundant literature on this topic. We should also mention a different direction which aims at taking into account possible time correlations for both the signal and observation noises. This is mainly achieved by considering a fractional Brownian motion model for the noises $W$ and $N$ (see e.g \cite{Coutin,Kleptsyna}), for which a complete answer in the nonlinear case is still a challenging issue.

In the current contribution, we wish to go back to one of the fundamental assumptions in equation \eqref{eq:filtering-intro}, namely the fact that the initial condition $X_{0}$ and the observation noise $N$ are independent (which follows from the fact that $N$ is $\cf_t$-Brownian motion). While this assumption seems to be natural at first sight, one can argue that this hypothesis is violated in many interesting situations.  We will go back to possible applications later in the introduction, but let us mention for the moment a classical target tracking problem for which misspecification on the initial condition has an influence on the observation noise (e.g via temperature, wind conditions or other environmental variables). We will thus see how the classical filtering problem is modified when $X_{0}$ and $N$ are correlated, by considering $X_{0}$ as an anticipative random variable (the reader might think of $X_{0}$ as a Wiener integral of the form $\int_{0}^{\infty} \phi_{s} \, dN_{s}$ for a deterministic function $\phi\in L^{2}(\RR_{+})$ in order to have a concrete example in mind).
This anticipative filtering problem has been considered only recently; see~\cite{oksendal}, where the authors assume that the signal $X_{t}=X_{0}$ is constant in time and correlated to the observation noise, and where the observation is linearly dependent on $X$.

The aim of this paper is thus to consider the aforementioned anticipative filtering problem in the general setting given by \eqref{eq:filtering-intro}, where we just assume the following general correlation property between the initial condition $X_{0}$ and the noise $N$.
\begin{hyp}\label{hyp.1}
Let $X_0$,  $W$ and $N$ be given as in equation \eqref{eq:filtering-intro}. We assume that the family $(X_0,W_t,  N_t, 0 \leq t \leq T)$ is Gaussian, such that  the correlation
\begin{equation}\label{eq:rho-N}
\rho_N(t) :=\mE( N_tX_0^T) , \qquad 0 \leq t \leq T
\end{equation}
is a   function in $C^{2}([0, T], \RR^{n\times m})$.
\end{hyp}
Within this general framework, we will focus on the following issue concerning the filtering problem:

\noindent
\emph{(i)}
In the general nonlinear case given by equation \eqref{eq:filtering-intro}, we derive a Zakai type equation for the unnormalized filter, as well as a Kushner-FKK type stochastic equation for the normalized filter. As the reader will see, the anticipative nature of our problem will affect all the coefficients of those equations.

\noindent
\emph{(ii)}
Whenever a linear situation is considered in the system \eqref{eq:filtering-intro}, we get a modified version of the Kalman-Bucy filter. As in the general case alluded to in (i), the coefficients of the linear filter are nontrivially affected by the correlation $\rho_{N}$. However, the Kalman-Bucy filter derived in Section \ref{section.linear} is very convenient to implement numerically.

\noindent
\emph{(iii)}
If we further assume that the correlation $\rho_{N}$ defined by \eqref{eq:rho-N} is compactly supported, then as expected,  we will be able to give some stability results for the anticipative Kalman-Bucy filter. More specifically, we show that for large times the difference between the anticipative and non-anticipative Kalman-Bucy filters converges exponentially fast to 0.

\noindent
\emph{(iv)}
Still in the linear case, we handle the case of a weighted Volterra type observation~$Z$ and get the corresponding expression for the anticipative finite dimensional optimal filter.  This should be seen as an alternative point of view on \cite{Coutin}, where a nonlinear Voterra type signal-observation system had been considered. Our method yields a straightforward implementation for the computation of the conditional mean and variance of the signal $X$.

\noindent
\emph{(v)}  Our simulation section will be focused on a classical radar-tracking example, where the vehicle's initial location is correlated with the observation noise. 
 The simulations show a significant improvement on the estimation accuracy whenever the anticipative filter is used.

\smallskip

\noindent
The basic technique we will resort to in order to deal with the anticipative filtering framework given by \eqref{eq:filtering-intro} can be summarized as follows.

\noindent
\emph{(a)}
We first rely on a general result concerning enlargements of filtrations. Namely, we will show that one can modify $N$ by a simple enough drift so that it becomes a $\cf_{t}$-Brownian motion (let us insist again on the fact that the system filtration $\cf$ includes the information on $X_{0}$). This general additional drift is then reflected in the coefficients of the filtering equations.  Note that the general enlargement of filtration result we invoke is interesting in its own right.  It can be seen as a multidimensional generalization of \cite{HU} and is detailed in Section \ref{section filtration}. 

\noindent
\emph{(b)}
Once the enlargement of filtration result is obtained, another key ingredient in the construction of the filter is a convenient introduction of some auxiliary signal process. Therefore, at the price of increasing slightly the dimension of our system and changing its coefficients, we will be able to go back to a more standard filtering setting.

\begin{notation}
For any integrable process $G_t$, $t \geq 0$, we denote by $\cf^{G}_{t}$, $t\geq 0$ the filtration $\si(G_{s}, s\in [0, t])$.  For convenience, we also   write $\hat{G}_t:=\mE [G_t| \cf^Z_t]$ and $ \tilde{G}_t := G_t -  \hat{G}_t$, where $Z$ is the observation process.
   \end{notation}

\section{Enlargement of filtration}\label{section filtration}

 When we consider an anticipative model like \eqref{eq:filtering-intro} under Hypothesis \ref{eq:rho-N}, one of the main problems is the following: while $N$ can be seen as a standard Brownian motion in $\cf^{N}$, it is no longer a Brownian motion with respect to the system filtration $\cf$.  Having this problem in mind,
in this section we show that there is a simple transformation of   $N$  that makes it a Brownian motion in the enlarged filtration.
This result will be first handled in a general framework. As mentioned in the introduction, it should be considered as a multidimensional generalization of    \cite{HU}.
\begin{lemma}\label{lem.filtration}
 Let $(B_t, 0 \leq t \leq T)$ be a standard $n$-dimensional Brownian Motion, and let $X = (X_1, X_2, \cdots, X_m)^T$ be a centered (mean zero) Gaussian random vector in $\RR^{m}$ with covariance~$\Sigma \in \RR^{m\times m}$. Suppose  that   $ \mE [B_t X^T] = \rho (t)$,
 where $\rho \in C^{2}([0, T]; \RR^{n\times m})$.  In addition, we assume that the family $\{X , B_{t}; 0\leq t \leq T\}$ is jointly Gaussian.
We consider a function $g(t) \in C([0,T]; \RR^{n\times m})$
 defined on $[0, T]$ (see Lemma \ref{lem.g} for the existence of such a function)   such that
\begin{eqnarray}\label{eqn.g}
g'(t)\lp\Sigma   -\int^t_0 \rho'(u)^T \rho'(u)  du\rp  &= \rho'(t)  , ~~~g(0)=0.
\end{eqnarray}
Let also $\lambda = \{\lambda(t, s); 0\leq s\leq t\leq t\}$ be the two-parameter $\RR^{n\times n}$-valued function defined by
\begin{eqnarray}\label{eqn.lam}
\lambda (t,s)   = g(t) p(s)+q(s),
\end{eqnarray}
 where $p \in C([0, T]; \RR^{m\times n}) $ and $q \in C([0, T]; \RR^{n\times n}) $ are respectively given by $p(s) =   \rho'' (s)^T$ and $ q(s) = -  g(s) \rho''(s)^T-g'(s) \rho'(s)^T  $.
  Then the process $\tilde{B} $ defined by
\begin{equation}\label{eqn.tb}
\begin{split}
\tilde{B}_t     = B_t - \int^t_0 \lambda (t,u)B_u du - g(t)X
\end{split}
\end{equation}
 is a $n$-dimensional $(\cg_t )$-Brownian motion, where
$ (\cg_t) $ is the augmented filtration $  \sigma(X, B_s; 0 \leq s \leq t)  $, $0 \leq t \leq T$.
 \end{lemma}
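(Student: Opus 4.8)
The plan is to verify L\'evy's characterization theorem for $\tilde B$: since $\tilde B$ is manifestly continuous and $\cg_t$-adapted, it suffices to show that $\tilde B$ is a $\cg_t$-martingale with bracket $\langle \tilde B^i, \tilde B^j\rangle_t = \delta_{ij}\, t$. Write $\tilde B_t = B_t - A_t$ with $A_t = \int_0^t \lambda(t,u) B_u\, du + g(t) X$. The quadratic variation is the easy half: inserting $\lambda(t,u)=g(t)p(u)+q(u)$ gives $A_t = g(t)\big(X + \int_0^t p(u) B_u\, du\big) + \int_0^t q(u) B_u\, du$, which is absolutely continuous in $t$ and hence of finite variation; therefore $\langle \tilde B\rangle_t = \langle B\rangle_t = t\, I_n$, and the correction term contributes nothing to the bracket.

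The martingale property is the heart of the matter, and I would establish it by identifying $A_t$ with the information drift acquired by $B$ in the initial enlargement by $X$. First I would use the joint Gaussianity to write the Hitsuda-type representation $X = \int_0^T \rho'(u)^T\, dB_u + X^\perp$, where $X^\perp$ is independent of $\cf^B_T$; the identity $\rho(t) = \mE[B_t X^T]$ forces the kernel to be $\rho'(u)^T$. A short computation then shows that $\Gamma(t) := \Sigma - \int_0^t \rho'(u)^T\rho'(u)\, du$ is exactly the conditional covariance $\mathrm{Cov}(X\,|\,\cf^B_t)$, so the defining ODE \eqref{eqn.g} reads $g'(t)\Gamma(t) = \rho'(t)$, i.e. $g'(t)$ is the Gaussian (Kalman-type) gain. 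Conditioning the increment $B_t - B_s$ on $\cg_s = \sigma(X, B_u; u\le s)$ via the Gaussian regression formula, and using that $B$ has independent increments, then yields the drift
\begin{equation}
\mE[dB_t\,|\,\cg_t] = \mu_t\, dt, \qquad \mu_t = g'(t)\Big(X - \int_0^t \rho'(u)^T\, dB_u\Big),
\end{equation}
where the ODE is used precisely to turn $\rho'(t)\Gamma(t)^{-1}$ into $g'(t)$.

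It then remains to check that $\int_0^t \mu_s\, ds = A_t$, which is where the two-parameter kernel $\lambda$ originates. Integrating $\int_0^t g'(s) X\, ds = g(t)X$ recovers the last term of $A_t$. For the stochastic part, I would integrate $\int_0^s \rho'(u)^T\, dB_u$ by parts and then apply stochastic Fubini to the resulting double $ds\,du$ integral; collecting the $B_u\, du$ terms produces precisely the coefficient $g(t)\rho''(u)^T - g(u)\rho''(u)^T - g'(u)\rho'(u)^T$, which equals $g(t)p(u)+q(u)=\lambda(t,u)$ by the definitions of $p$ and $q$. Hence $\tilde B_t = B_t - \int_0^t \mu_s\, ds$ is the compensated process and is a $\cg_t$-Brownian motion.

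I expect the integration-by-parts/Fubini reconciliation in the last step to be the main obstacle, since one must track the dependence of $\lambda(t,u)$ on $t$ through both the upper limit and the integrand and match coefficients exactly; a secondary care point is upgrading the infinitesimal drift identity to the genuine martingale property $\mE[\tilde B_t - \tilde B_s\,|\,\cg_s]=0$ for all $s<t$ (equivalently, verifying directly that $\tilde B_t - \tilde B_s$ is uncorrelated with $X$ and with $B_u$, $u\le s$, which by Gaussianity gives independence). The solvability of \eqref{eqn.g} providing a genuine $C^1$ function $g$ is assumed from Lemma \ref{lem.g}.
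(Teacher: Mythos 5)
Your proposal is correct, and it reaches the result by a genuinely different route than the paper. The paper never introduces the martingale $M_t=\mE[X\mid \cf^B_t]=\int_0^t\rho'(u)^T\,dB_u$ in this proof: it reduces everything, via L\'evy's characterization plus Gaussian decorrelation, to the two identities $\mE[(\tilde B_t-\tilde B_s)X^T]=0$ and $\mE[(\tilde B_t-\tilde B_s)B_r^T]=0$ for $r\le s\le t$, and then checks these as purely deterministic statements about $\rho$, $g$, $\lambda$, the key trick being the identity $\lambda(t,s)=\partial_s\lp g(t)\rho'(s)^T-g(s)\rho'(s)^T\rp$ followed by integrations by parts and the ODE \eref{eqn.g}. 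You instead first identify the drift as the classical Gaussian initial-enlargement (information) drift $\mu_t=g'(t)(X-M_t)$ and then reconcile $\int_0^t\mu_s\,ds$ with $\int_0^t\lambda(t,u)B_u\,du+g(t)X$ by integration by parts and Fubini; this reconciliation does close exactly as you predict, the collected kernel being $g(t)\rho''(u)^T-g(u)\rho''(u)^T-g'(u)\rho'(u)^T=\lambda(t,u)$. Your organization buys conceptual clarity---it explains where $g$ and $\lambda$ come from ($g'$ is the Kalman-type gain, and $\lambda$ is what the innovation $X-M_t$ becomes when rewritten in terms of $B_u$ and $X$)---and the decorrelation checks are shorter in the $\mu$-form, since $\mE[(X-M_u)X^T]=\mE[(X-M_u)(X-M_u)^T]=\Sigma-\int_0^u\rho'(v)^T\rho'(v)\,dv$ together with the ODE gives the first identity, while $\mE[(X-M_u)B_r^T]=0$ for $r\le u$ gives the second; the paper's form is more self-contained and avoids the Hitsuda representation. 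Two points to tighten when writing this up. First, as you yourself flag, ``Gaussian regression on $\cg_s$'' gives $\mE[B_t-B_s\mid\cg_s]=(\rho(t)-\rho(s))\bigl(\Sigma-\int_0^s\rho'(v)^T\rho'(v)\,dv\bigr)^{-1}(X-M_s)$, which is not literally $\int_s^t\mu_u\,du$; the clean proof is your stated fallback, namely the direct verification that $\tilde B_t-\tilde B_s$ is uncorrelated with $X$ and with $B_r$, $r\le s$---precisely the point where your argument and the paper's merge. Second, writing $g'(t)=\rho'(t)\Gamma(t)^{-1}$ presupposes invertibility of the conditional covariance, whereas the lemma only assumes $g'(t)\Gamma(t)=\rho'(t)$; since the decorrelation computations use only this weaker relation, you should state the drift through the ODE rather than through the inverse (this also matches Corollary \ref{cor.g}, which sets $g'=0$ past the degeneracy time). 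Finally, your explicit treatment of the bracket (the correction term is absolutely continuous, hence of finite variation, so $\langle\tilde B\rangle_t=\langle B\rangle_t=tI_n$) is a point the paper leaves implicit, and is worth keeping.
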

\begin{proof} We are going to   show that $\tilde{B}_t$ is a $\cg$-martingale. Then, taking into account    L\'evy's characterization and the continuity of $\tilde{B}$, we can  conclude  immediately  that $\wt{B}$ is a standard $\cg$-Brownian motion.

On a Gaussian space, it is well known that decorrelation implies independence. Therefore, in order to show that $\tilde{B}$ is a $\cg$-martingale, it suffices to show that
 \begin{equation}\label{eqn.tb.var}
\begin{split}
 \mE \{(\tilde{B}_t-\tilde{B}_s) X^T\}=0 \quad \text{ and } \quad \mE \{(\tilde{B}_t-\tilde{B}_s) B_r^T\}=0,\quad 0 \leq r \leq s \leq t.
\end{split}
\end{equation}

  Note that by  \eref{eqn.tb} and the identities $\mE [XX^T] = \Sigma  $ and $ \mE [B_t X^T] = \rho (t)$, the first equation of \eref{eqn.tb.var} is equivalent to
   \begin{eqnarray}
\label{eqn.tb.1}
0=& \lp\rho(t) -\int^t_0 \lambda (t,u) \rho(u)du -g(t) \Sigma\rp- \lp\rho(s)-\int^s_0 \lambda (s,u) \rho(u)du  - g(s) \Sigma\rp  .
\end{eqnarray}
We will now focus on the proof of \eref{eqn.tb.1}. To this aim observe that, due to the fact that   $\rho(0)=0$ and $g(0)=0$ (see relation \eref{eqn.g}), equation \eref{eqn.tb.1} is equivalent to
\begin{eqnarray}\label{eqn.tb.3}
0=&  \rho(t) -\int^t_0 \lambda (t,u) \rho(u)du -g(t) \Sigma\, .
\end{eqnarray}
We are now reduced to the proof of \eref{eqn.tb.3}.

Denote by $\partial_{s}$ the partial derivative with respect to the parameter $s$. Thanks to the definition \eref{eqn.lam} of $\lambda(t, s)$, the reader can easily check that $\lambda$ satisfies the following relation:
\begin{eqnarray}\label{eqn.lam2}
\lambda (t,s) = \partial_{s} \lp g(t)\rho'(s)^{T}  - g(s) \rho'(s)^{T}\rp \,.
\end{eqnarray}
Substituting \eref{eqn.lam2} into the right   side of \eref{eqn.tb.3} and then by    integration by parts  (together with the fact that $\rho (0)=0$) we obtain
\begin{eqnarray}\label{eqn.tb.31}
&& \rho(t) -\int^t_0 \lambda (t,u) \rho(u)du -g(t) \Sigma   \,= \,\rho(t)  + \int^t_0 [g(t) \rho'(u)^T - g(u) \rho'(u)^T]\rho'(u)du     -g(t) \Sigma
\nonumber
\\
  &&\quad\quad\quad =
  \rho(t) + \int_{0}^{t} [g(t) - g(u)] \rho'(u)^{T} \rho'(u) du - g(t) \Si
  .
\end{eqnarray}
We now apply   integration by parts  again to $U(u) = g(t)-g(u)$ and $dV(u) = \rho'(u)^{T} \rho'(u) du $, which yields
\begin{eqnarray}\label{eqn.tb.32}
 \int_{0}^{t} [g(t) - g(u)] \rho'(u)^{T} \rho'(u) du
  & =&  \int^t_0g'(u) \int^u_0\rho'(s)^T \rho'(s)ds du  .
\end{eqnarray}
Substituting \eref{eqn.tb.32} into \eref{eqn.tb.31}
and writing $g(t)\Si = \int_{0}^{t} g'(u)\Si du $
 we obtain
\begin{eqnarray}\label{eqn.tb.33}
 \rho(t) -\int^t_0 \lambda (t,u) \rho(u)du -g(t) \Sigma
 =
  \rho(t) +   \int^t_0g'(u) \lp
 \int^u_0\rho'(s)^T \rho'(s)ds-\Sigma
 \rp du .
\end{eqnarray}
Recalling that $g$ satisfies \eref{eqn.g}, it is now readily checked that the right-hand side of \eref{eqn.tb.33} vanishes. Hence we have proved our claim  \eref{eqn.tb.3}, which in turn proves the first assertion of relation \eref{eqn.tb.var}.

Let us turn to the  second equation of \eref{eqn.tb.var}.
Similarly to   \eref{eqn.tb.1} and recalling that $\lambda$ is a $\RR^{n\times n}$-valued function, the second equation of \eref{eqn.tb.var} can be written as
 \begin{eqnarray}\label{eqn.tb.2}
0&=& -\int^t_0 \lambda (t,u)\lc \begin{array}{ccc} u \wedge r &&0\\
 &\ddots&\\ 0&& u \wedge r\\ \end{array} \rc du + \int^s_0 \lambda (s,u)\lc \begin{array}{ccc} u \wedge r &&0\\
 &\ddots&\\ 0&& u \wedge r\\ \end{array} \rc du
\nonumber
 \\
 &&- g(t)\rho(r)^T + g(s)\rho(r)^T
\nonumber
 \\
 &=& -r\int^t_r \lambda (t,u)du - \int^r_0 \lambda (t,u) udu +r\int^s_r \lambda (s,u)du + \int^r_0 \lambda (s, u)udu
 \nonumber
 \\
 &&- g(t)\rho(r)^T + g(s)\rho(r)^T.
\end{eqnarray}

In the following, we show that   \eref{eqn.tb.2} holds. Note that,
owing to the fact that $g(0)=0$,
 it is clear that \eref{eqn.tb.2} holds for $r=0$, so   equation \eref{eqn.tb.2} is equivalent to
 \begin{equation}\label{eqn.tb.4}
\begin{split}
0=& -\int^t_r \lambda (t,u)du + \int^s_r \lambda (s,u) du - g(t)\rho'(r)^T +g(s)\rho'(r)^T ,\\
\end{split}
\end{equation}
 which is obtained by differentiating both sides of  \eref{eqn.tb.2}  with respect to $r$.
 Furthermore,
 along the same lines as for \eref{eqn.tb.3} and invoking the fact that $\int_{r}^{s} \lambda(s, u) du =0$ when $s=r$, it is easy to see  that equation \eref{eqn.tb.4} is equivalent to
 \begin{eqnarray}\label{eqn.lam3}
0=& -\int^t_r \lambda (t,u)du     - g(t)\rho'(r)^T +g(r)\rho'(r)^T .
\end{eqnarray}
Now relation \eref{eqn.lam3} follows immediately from   \eref{eqn.lam2}.
Plugging this information in our previous considerations, we get that \eref{eqn.tb.4} and therefore \eref{eqn.tb.2} hold true. This proves that   the second equation of \eref{eqn.tb.var} is satisfied. The proof is complete.
 \end{proof}

  With Lemma \ref{lem.filtration} in hand,
  notice that one can derive an explicit formula for $g(t)$ from \eref{eqn.g} if the symmetric matrix  $\Sigma - \int^t_0\rho'(s)^T \rho'(s)ds$ is non-singular.  The following result provides an equivalent condition to the non-singularity of   $\Sigma - \int^t_0\rho'(s)^T \rho'(s)ds$.
\begin{lemma}\label{lem.g}
Let the assumptions be as in Lemma \ref{lem.filtration}. Then

(a) The following identity holds for all $t\geq 0$:
\begin{eqnarray*}
\Sigma - \int^t_0\rho'(s)^T \rho'(s)ds= \mE \lp\lp X-\mE (X| \cf^B_t)\rp \lp X-\mE (X| \cf^B_t)\rp^T\rp  .
\end{eqnarray*}

(b) The matrix $\Sigma - \int^t_0\rho'(s)^T \rho'(s)ds$ is non-singular for all $t\in [0, T]$ whenever    $X \notin \cf^B_T$.

(c) If $\rho'(t_{0})\neq 0$ for some $t_{0}>0$, then $\Sigma - \int^t_0\rho'(s)^T \rho'(s)ds$ is non-singular for all $t\in [0,t_{0}]$.
\end{lemma}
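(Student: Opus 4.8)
The three parts flow from a single observation: by part (a) the matrix $\Sigma-\int_0^t\rho'(s)^T\rho'(s)\,ds$ is exactly the error covariance of the Gaussian projection of $X$ onto $\cf^B_t$, so its singularity is equivalent to the existence of a linear functional of $X$ that is already $\cf^B_t$-measurable. To establish (a), I would set $\Pi_t:=\mE(X\mid\cf^B_t)$ and first prove the representation $\Pi_t=\int_0^t\rho'(s)^T\,dB_s$. Since $(X,B)$ is jointly Gaussian, $\Pi_t$ is the $L^2$-projection of $X$ onto the first Wiener chaos generated by $(B_s)_{s\le t}$, hence of the form $\int_0^t f(s)\,dB_s$; the normal equations $\mE[(X-\Pi_t)B_r^T]=0$ for $r\le t$ read $\rho(r)^T=\int_0^r f(s)\,ds$, and differentiating in $r$ gives $f=\rho'(\cdot)^T$. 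Part (a) then follows from orthogonality of the projection, $\mE[(X-\Pi_t)(X-\Pi_t)^T]=\Sigma-\mE[\Pi_t\Pi_t^T]$, together with the It\^o isometry $\mE[\Pi_t\Pi_t^T]=\int_0^t\rho'(s)^T\rho'(s)\,ds$. This projection computation is the substantive step.

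For part (b), write $M(t):=\Sigma-\int_0^t\rho'(s)^T\rho'(s)\,ds$. By (a) this is a covariance matrix, hence symmetric positive semidefinite for every $t\in[0,T]$, and it is non-increasing in $t$ because $M(t')-M(t)=\int_t^{t'}\rho'(s)^T\rho'(s)\,ds\succeq 0$ for $t\le t'$. Consequently $M(T)\succ 0$ forces $M(t)\succeq M(T)\succ 0$ for all $t\le T$, so it suffices to rule out singularity at $t=T$. Now $v^TM(T)v=\mathrm{Var}\bigl(v^T(X-\Pi_T)\bigr)=0$ for some $v\ne 0$ means precisely $v^TX=v^T\Pi_T$ a.s., i.e. the functional $v^TX$ is $\cf^B_T$-measurable; under the hypothesis that no nonzero linear combination $v^TX$ is $\cf^B_T$-measurable (which is what $X\notin\cf^B_T$ amounts to here) we obtain $M(T)\succ 0$, and the claim follows.

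For part (c) I would argue by contradiction, exploiting the monotonicity of part (b) more quantitatively. Suppose $M(t_1)$ is singular for some $t_1\le t_0$, and choose $v\ne 0$ with $v^TM(t_1)v=0$. The scalar function $h(t):=v^TM(t)v=v^T\Sigma v-\int_0^t|\rho'(s)v|^2\,ds$ is non-increasing, non-negative on $[0,T]$ (again by the positive semidefiniteness from (a)), and vanishes at $t_1$; hence $h\equiv 0$ on $[t_1,T]$, and differentiating forces $\rho'(s)v=0$ for all $s\in[t_1,T]$, in particular $\rho'(t_0)v=0$ with $v\ne 0$. The main obstacle is exactly the final implication: deriving a contradiction with the hypothesis $\rho'(t_0)\ne 0$. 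This is immediate when $m=1$, since a nonzero row $\rho'(t_0)$ cannot annihilate a nonzero scalar direction, and more generally when $\rho'(t_0)$ has trivial kernel; I would therefore read the hypothesis as the injectivity of $\rho'(t_0)$ (equivalent to $\rho'(t_0)\ne 0$ when $m=1$), since the reduction above shows that this is the precise condition that prevents a degenerate direction from appearing on $[0,t_0]$.
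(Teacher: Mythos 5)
Your parts (a) and (b) are in substance the paper's own proof. For (a) the paper likewise represents the Gaussian martingale $\mE(X|\cf^B_t)$ as a Wiener integral $\int_0^t f_s\,dB_s$, identifies $f_t=\rho'(t)^T$ from the covariance, and concludes by orthogonality and the It\^o isometry; for (b) it likewise translates singularity of the error covariance into the existence of a nonzero $k\in\RR^m$ with $\sum_i k_iX_i$ measurable with respect to $\cf^B_t$ (your reduction to $t=T$ via monotonicity is a small variant, modulo an obvious sign slip: for $t\le t'$ it is $M(t)-M(t')=\int_t^{t'}\rho'(s)^T\rho'(s)\,ds$ that is positive semidefinite). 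The one substantive difference is that you say out loud what the paper leaves unsaid: the hypothesis ``$X\notin\cf^B_T$'' suffices only if it is read as ``no nonzero linear functional $v^TX$ is $\cf^B_T$-measurable''. For $m\ge 2$, non-measurability of the vector $X$ does not exclude a measurable linear combination, so the paper's sentence ``but this holds whenever $X\notin\cf^B_t$'' is, as literally written, a gap that your reading repairs.

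For (c) your route is genuinely different from, and sharper than, the paper's. The paper picks $\ep>0$ with $\rho'\neq 0$ on $[t_0,t_0+\ep]$, shows $\EE(X|\cf^B_{t_0+\ep})=\int_0^{t_0+\ep}\rho'(s)^T dB_s$ is not $\cf^B_t$-measurable for $t\le t_0$, deduces that $X$ is not $\cf^B_t$-measurable, and then cites (b). This establishes only the weak (vector) non-measurability, whereas (b) requires the strong (no-linear-combination) version, so the paper's deduction of (c) is incomplete. Your monotone scalar argument with $h(t)=v^T\lp\Sigma-\int_0^t\rho'(s)^T\rho'(s)\,ds\rp v$ identifies exactly what can be salvaged: a singular direction $v\neq 0$ at some $t_1\le t_0$ forces $\rho'(t_0)v=0$, so the correct hypothesis is that $\rho'(t_0)$ has trivial kernel, not merely $\rho'(t_0)\neq 0$. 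This is not pedantry: the literal statement of (c) fails for $m\ge 2$. Take $n=1$, $X_1=\int_0^{1/2}\phi(s)\,dB_s$ and $X_2=\int_0^{2}\psi(s)\,dB_s$ with smooth bump functions $\phi,\psi$; then $\rho'(t)=\lp \phi(t)\mathbf{1}_{[0,1/2]}(t),\,\psi(t)\rp$, so for $t_0\in(1/2,2)$ one has $\rho'(t_0)=(0,\psi(t_0))\neq 0$, and yet $\Sigma-\int_0^t\rho'(s)^T\rho'(s)\,ds$ is singular in the direction $(1,0)^T$ for every $t\in[1/2,t_0]$, because $X_1$ is $\cf^B_{1/2}$-measurable. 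So your ``injectivity'' reading (automatic when $m=1$) is not a weakness of your proof but the needed correction of the lemma; note that the correction also propagates to Corollary \ref{cor.g}, which invokes (c) to invert $\Sigma-\int_0^t\rho'(s)^T\rho'(s)\,ds$.
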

\begin{proof}
Note first that
the $\cf^B$-Gaussian martingale $\mE (X| \cf^B_t)$ can be represented as a Wiener integral $\int^t_0 f_s dB_s$\,, where $f_s  \in L_{2}([0,T], \RR^{m\times n})$. By the definition of $\rho(t)$ it is easy to show that $ f_t  = \rho'(t)^T$. Therefore, we have
\begin{eqnarray*}
\mE \lp\lp X-\mE (X| \cf^B_t)\rp \lp X-\mE (X| \cf^B_t)\rp^T\rp
&=&
 \Si- \mE \lp\mE (X| \cf^B_t) \mE (X| \cf^B_t)^T\rp
\\
&=&   \Si-  \int^t_0  \rho'(s)^T \rho'(s) ds,
\end{eqnarray*}
which finishes the proof of our assertion (a).

We turn to the proof of (b).  Note   that the matrix $\mE \{[X-\mE (X| \cf^B_t)] [X-\mathbf{E}(X| \cf^B_t)]^T\}  $ is singular if and only if there exists a constant vector $(k_1, k_2, \cdots, k_m) \neq 0$  such that $  \Sigma_{i=1}^m k_i X_i  \in \cf^B_t$. In other words, $\mE \{[X-\mE (X| \cf^B_t)] [X-\mathbf{E}(X| \cf^B_t)]^T\}  $ is nonsingular if and only if  $  \Sigma_{i=1}^m k_i X_i  \notin \cf^B_t$ for any  $(k_1, k_2, \cdots, k_m)$. But this holds whenever $X  \notin \cf^B_t$. This completes the proof of (b). 

In order to prove (c), suppose now that $\rho'(t_{0})\neq 0$ for some $t_{0}>0$. Then invoking the continuity of $\rho'$ we can find a positive number $\ep>0$ such that $\rho'(s)\neq 0$ for $s\in [t_{0}, t_{0}+\ep]$. On the other hand, we have shown at the beginning of the proof that 
$$\EE(X|\cf^{B}_{t_{0}+\ep})= \int_{0}^{t_{0}+\ep}\rho'(s)dB_{s}=  \int_{0}^{t_{0} }\rho'(s)dB_{s}+ \int_{t_{0}}^{t_{0}+\ep}\rho'(s)dB_{s}.
$$
Consider now a given $t\in [0, t_{0}]$ and
note that since $\rho'(s)\neq 0$ for  $s\in [t_{0}, t_{0}+\ep]$, we have $\EE(X|\cf^{B}_{t_{0}+\ep})\notin \cf^{B}_{t}$. Therefore it is readily checked that $X= \EE(X|\cf^{B}_{t_{0}+\ep}) + (X-\EE(X|\cf^{B}_{t_{0}+\ep}))\notin \cf^{B}_{t} $. We can now apply directly item (b) and we conclude that $\Sigma - \int^t_0\rho'(s)^T \rho'(s)ds  $ is non-singular for all $t\in [0, t_{0}]$.
  The proof is complete.
\end{proof}
We can now give an explicit version for the relation  \eref{eqn.g} of $g$ under non-degeneracy assumptions in terms of $\rho$.
This follows immediately from Lemma \ref{lem.g} (c).
\begin{cor}\label{cor.g}
Let the assumptions be as in Lemma \ref{lem.filtration}. Let $T_{0}= \sup\{t\geq 0: \rho'(t)\neq 0 \}\in [0, \infty]$.
For $t<T_{0}$ we consider a function $g$ defined by $g(0)=0$ and
\begin{eqnarray*}
g'(t)  &=& \rho'(t) \lp\Sigma   -\int^t_0 \rho'(u)^T \rho'(u)  du\rp^{-1}.
\end{eqnarray*}
We also set  $g'(t)=0$ for $t\geq T_{0}$. Then the function $g$ satisfies equation \eref{eqn.g}. In particular, it is always possible to find a function $g$ such that \eref{eqn.g} is satisfied.
\end{cor}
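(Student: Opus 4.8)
The plan is to read the corollary as the explicit resolution of the linear matrix relation \eref{eqn.g}. Writing $M(t) := \Sigma - \int_0^t \rho'(u)^T \rho'(u)\,du \in \RR^{m\times m}$, equation \eref{eqn.g} is simply $g'(t)M(t) = \rho'(t)$ together with $g(0)=0$. Whenever $M(t)$ is invertible we may set $g'(t) = \rho'(t)M(t)^{-1}$, which solves the relation by construction; so the entire content of the statement is to guarantee that this definition is legitimate on $[0,T_0)$ and that nothing is lost on $[T_0,T]$. Since $\rho\in C^2$, the integrand $\rho'(u)^T\rho'(u)$ is continuous and $M$ is $C^1$, so $t\mapsto M(t)^{-1}$ is continuous on any interval where $M$ is non-singular; consequently $g(t)=\int_0^t g'(u)\,du$ is a genuine $C^1$ (hence continuous) $\RR^{n\times m}$-valued function there, of the type required in Lemma \ref{lem.filtration}.

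The first substantive step is invertibility of $M(t)$ on $[0,T_0)$, which is exactly where Lemma \ref{lem.g}(c) does the work. Fix $t<T_0$. By the definition $T_0=\sup\{s\geq 0:\rho'(s)\neq 0\}$ there must exist $t_0>t$ with $\rho'(t_0)\neq 0$, since otherwise every point of that set would be $\leq t$ and we would have $T_0\leq t$. Applying Lemma \ref{lem.g}(c) with this $t_0$ shows that $M(s)$ is non-singular for all $s\in[0,t_0]$, in particular for $s=t$. Hence $M(t)$ is invertible for every $t<T_0$, the formula $g'(t)=\rho'(t)M(t)^{-1}$ is well posed, and it satisfies \eref{eqn.g} on $[0,T_0)$ by inspection.

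It remains to treat $t\geq T_0$. For $t>T_0$ we have $\rho'(t)=0$ directly from the definition of the supremum, and for $t=T_0<\infty$ the continuity of $\rho'$ forces $\rho'(T_0)=\lim_{s\downarrow T_0}\rho'(s)=0$ as well; thus setting $g'(t)=0$ on $[T_0,T]$ makes both sides of \eref{eqn.g} vanish and the relation persists there. Patching the two regimes, $g(t)=\int_0^t g'(u)\,du$ satisfies \eref{eqn.g} on all of $[0,T]$, which yields both the explicit formula and, a fortiori, the existence assertion invoked in Lemma \ref{lem.filtration}. The step I expect to be the genuine obstacle is not the algebraic verification but the behaviour of $g'$ as $t\uparrow T_0$: a priori $M(t)^{-1}$ could blow up if $M(T_0)$ degenerates, i.e.\ if $X\in\cf^{B}_{T_0}$, so one must check that $\rho'(t)M(t)^{-1}$ stays integrable near $T_0$ in order that $g$ be well defined and continuous across $T_0$. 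I would control this using the monotonicity of $M(\cdot)$ in the positive semidefinite order furnished by Lemma \ref{lem.g}(a) together with the fact that $\rho'(t)\to 0$; alternatively one restricts attention to the range where $M(T_0)$ remains non-singular, in which case no blow-up occurs and the conclusion is immediate.
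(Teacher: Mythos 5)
Your first two paragraphs are exactly the paper's own proof: the paper disposes of this corollary with the single line that it ``follows immediately from Lemma \ref{lem.g}(c)'', and the reasoning being compressed there is precisely what you spell out --- for $t<T_{0}$ the definition of the supremum yields $t_{0}>t$ with $\rho'(t_{0})\neq 0$, Lemma \ref{lem.g}(c) then gives invertibility of $M(t)=\Sigma-\int_{0}^{t}\rho'(u)^{T}\rho'(u)\,du$ on $[0,t_{0}]$, and for $t\geq T_{0}$ continuity of $\rho'$ makes both sides of \eref{eqn.g} vanish. That part is correct and matches the intended argument.

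The obstacle you flag in your last paragraph, however, is a genuine gap --- in the corollary as stated, not merely in your write-up --- and your first proposed repair cannot close it. Take $m=n=1$, $0<T_{0}<T$, and $X=\int_{0}^{T_{0}}\phi(s)\,dB_{s}$ with $\phi(s)=(T_{0}-s)^{2}$ for $s\leq T_{0}$ and $\phi\equiv 0$ afterwards. Then the family is jointly Gaussian, $\rho(t)=\EE(B_{t}X)=\int_{0}^{t\wedge T_{0}}\phi(s)\,ds$ is $C^{2}$, $\rho'=\phi$, $\Sigma=T_{0}^{5}/5$, and for $t\leq T_{0}$
\begin{equation*}
M(t)=\int_{t}^{T_{0}}(T_{0}-u)^{4}\,du=\tfrac{1}{5}(T_{0}-t)^{5},
\end{equation*}
so that \eref{eqn.g} \emph{forces} $g'(t)=\rho'(t)M(t)^{-1}=5(T_{0}-t)^{-3}$ on $[0,T_{0})$, hence $g(t)=\tfrac{5}{2}\lp(T_{0}-t)^{-2}-T_{0}^{-2}\rp\to\infty$ as $t\uparrow T_{0}$. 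Here $\rho'(t)\to 0$ and $M$ is monotone in the positive semidefinite order, yet $g'$ is not integrable near $T_{0}$: the blow-up of $M(t)^{-1}$ beats the decay of $\rho'$, so monotonicity plus $\rho'\to 0$ is not enough, and no function continuous on $[0,T]$ (as Lemma \ref{lem.filtration} requires) can satisfy \eref{eqn.g}. Your fallback is the correct fix: one must assume $M(T_{0})$ non-singular, i.e.\ exclude the degenerate case $X\in\cf^{B}_{T_{0}}$ of Lemma \ref{lem.g}(b) (which is exactly what my example violates). Under that hypothesis $M(t)^{-1}$ stays bounded on $[0,T_{0}]$, $g'(t)=\rho'(t)M(t)^{-1}\to 0$ as $t\uparrow T_{0}$, and $g$ is $C^{1}$ across $T_{0}$. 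This non-degeneracy is what the paper tacitly invokes (the sentence introducing the corollary says ``under non-degeneracy assumptions''), so the corollary's final claim that a suitable $g$ ``always'' exists is an overstatement that neither the paper's one-line proof nor any argument can establish in the stated generality; you were right to isolate this as the real obstacle, but it should be recorded as an additional hypothesis rather than something provable.
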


\section{Anticipative filtering equation: Nonlinear case}

 In this section, we go back to the anticipative filtering problem \eref{eq:filtering-intro}, which is recalled here for the reader's convenience:
\begin{eqnarray}\label{eqn.1}
 X_t&=& X_{0}+\int_{0}^{t} a(X_s)ds +  W_t,
 \nonumber
 \\
 Z_t &=& \int^t_0 h(X_s) ds + N_t,
\end{eqnarray}
where
the family $(X_{0} , W_{t}, N_{t}; 0\leq t\leq T)$ satisfies Hypothesis \ref{hyp.1}. In particular, we assume  that  $   \rho_N(t) :=\mE( N_tX_0^T) $, $0 \leq t \leq T$  is a   function in $C^{2}([0, T], \RR^{n\times m})$.

  In order to derive an equation for the optimal filter, let us first see how Lemma \ref{lem.filtration} allows us to reduce our computations to an adaptive signal-observation system with modified coefficients.

   \begin{lemma}\label{lem.nmodel}
  Let $(X, Z)$ be the solution of \eref{eqn.1} and assume that Hypothesis \ref{hyp.1} is satisfied.
  Let $p$, $q$, $\lambda$, and $g$ be functions defined as in Lemma \ref{lem.filtration} with $\rho$ replaced by $\rho_{N}$.
  Set
  \begin{eqnarray}\label{eqn.nt}
\tilde{N}_{t} = N_{t} - \int_{0}^{t} \lambda (t,u)N_{u}du - g(t)X_{0}.
\end{eqnarray}
 The following statements holds:

  (a)  $\tilde{N}$ is a $\RR^{n}$-valued $\cf$-Brownian motion.

(b)  Consider the $\RR^{m}$-valued process $(\bar{X}_{t})_{t\leq T}$ defined by:
\begin{eqnarray}\label{eqn.xb}
  \bar{X}_{t} &=& X_{0}+ \int_{0}^{t}  p(s) N_{s}ds .
\end{eqnarray}
In addition, denote
\begin{eqnarray}\label{eqn.r}
r(t)=\lambda (t,t) = -g'(t)\rho'(t)^{T},
\end{eqnarray}
 and observe that $r(t) \in \RR^{n\times n}$ for all $t\leq T$. We define a process $(U_{t})_{t\leq T}$ such that $U_{t} \in \RR^{m}\times \RR^{m}\times \RR^{n}$
and some coefficients $b$, $\si$, $c$
 as follows:
\begin{eqnarray}\label{eqn.coeff}
U =
\lc
\begin{array}{c}
 X
\\
 \bar{X}
\\
 N
\end{array}
\rc,
\quad\quad
b (U_{t}) =
\lc
\begin{array}{c}
a(X_{t})
\\
p(t)N_{t}
\\
g'(t)\bar{X}_{t} + r(t)N_{t}
\end{array}
\rc,
\quad\quad
\si =
\lc
\begin{array}{c}
 I_{m}
\\
 0
\\
 0
\end{array}
\rc ,
\quad\quad
c =
\lc
\begin{array}{c}
 0
\\
 0
\\
 I_{n}
\end{array}
\rc .
\end{eqnarray}
Eventually, define a $\RR^{m}$-valued coefficient $k$ by:
\begin{eqnarray}\label{eqn.k}
k(U_{t}) &=& h(X_t)  + g'(t) \bar{X}_{t} +  r(t)N_t\,.
\end{eqnarray}
Then $(X, Z)$ satisfies a signal-observation system expressed in terms of $(U, Z)$:
\begin{eqnarray}
dU_{t} &=& b(U_{t})dt + cd\tilde{N}_{t} +\si dW_{t}
\label{eqn.U}
\\
dZ_{t} &=& k(U_{t}) dt + d\tilde{N}_{t} .
\label{eqn.model}
\end{eqnarray}

(c) The augmented    system \eref{eqn.U}-\eref{eqn.model} is now governed by $(W, \tilde{N})$, which is a $\cf$-Brownian motion.
   \end{lemma}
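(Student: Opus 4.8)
I would treat the three assertions separately. For (a), the plan is to apply Lemma \ref{lem.filtration} directly, with $B=N$, $X=X_0$, $\rho=\rho_N$ and $\Sigma=\mE[X_0 X_0^T]$. Hypothesis \ref{hyp.1} supplies precisely the two inputs that lemma requires---joint Gaussianity of $\{X_0,N_t;\,0\le t\le T\}$ and $\rho_N\in C^2$---and since $p,q,\lambda,g$ are defined here exactly as in Lemma \ref{lem.filtration} (with $\rho$ replaced by $\rho_N$), the process $\tilde N$ of \eref{eqn.nt} is literally the process $\tilde B$ of \eref{eqn.tb}. The lemma then yields that $\tilde N$ is a Brownian motion for $\cg_t=\sigma(X_0,N_s;\,0\le s\le t)$. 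The one gap to fill is that the statement claims a Brownian motion for the full system filtration $\cf$, which additionally carries $W$; here I would invoke that $W$ is independent of $(X_0,N)$ (decorrelation together with joint Gaussianity), so that enlarging $\cg$ by this independent source preserves the martingale property and $\tilde N$ remains an $\cf$-Brownian motion by L\'evy's characterization.

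For (b) I would verify the three lines of \eref{eqn.U} and then \eref{eqn.model} one component at a time. The $X$- and $\bar X$-components are immediate: $dX_t=a(X_t)dt+dW_t$ is just \eref{eqn.1}, while $d\bar X_t=p(t)N_t\,dt$ with $\bar X_0=X_0$ is the definition \eref{eqn.xb}. The heart of the matter is the $N$-component, i.e. the identity $dN_t=(g'(t)\bar X_t+r(t)N_t)dt+d\tilde N_t$, which I would obtain by differentiating \eref{eqn.nt} in $t$. The subtlety is that $t$ enters $\int_0^t\lambda(t,u)N_u\,du$ both through the upper limit and through the kernel, since $\lambda(t,u)=g(t)p(u)+q(u)$. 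Leibniz's rule produces a boundary term $\lambda(t,t)N_t=r(t)N_t$ (recall \eref{eqn.r}) together with a kernel-derivative term $g'(t)\int_0^t p(u)N_u\,du$, and it is precisely here that the auxiliary process $\bar X$ pays off: by \eref{eqn.xb} one has $\int_0^t p(u)N_u\,du=\bar X_t-X_0$, and the stray $g'(t)X_0\,dt$ cancels against the derivative of the term $-g(t)X_0$ in \eref{eqn.nt}. Collecting the surviving terms yields exactly the drift $g'(t)\bar X_t+r(t)N_t$. The observation equation \eref{eqn.model} then follows by substituting this expression for $dN_t$ into $dZ_t=h(X_t)dt+dN_t$ (from \eref{eqn.1}) and reading off $k$ from \eref{eqn.k}.

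For (c) it remains to see that $(W,\tilde N)$ is a single $(m+n)$-dimensional $\cf$-Brownian motion. Both $W$ (by hypothesis) and $\tilde N$ (by part (a)) are continuous $\cf$-Brownian motions, so by L\'evy's characterization applied to the concatenated process it suffices to check that the cross bracket vanishes. Since $\tilde N$ differs from $N$ only by a finite-variation (indeed absolutely continuous in $t$) process, one has $\langle W,\tilde N\rangle=\langle W,N\rangle=0$ by the decorrelation of $W$ and $N$, which closes the argument.

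I expect the only genuinely delicate step to be the differentiation in part (b): one must keep careful track of the double $t$-dependence of $\int_0^t\lambda(t,u)N_u\,du$ and recognize that the definition of $\bar X$ is engineered precisely to absorb the kernel-derivative contribution $g'(t)\int_0^t p(u)N_u\,du$. By contrast, parts (a) and (c) are essentially bookkeeping layered on top of Lemma \ref{lem.filtration} and L\'evy's theorem.
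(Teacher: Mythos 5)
Your proposal is correct and takes essentially the same route as the paper: part (a) is the same direct application of Lemma \ref{lem.filtration}, and your Leibniz-rule computation in (b) --- boundary term $\lambda(t,t)N_t=r(t)N_t$ plus kernel-derivative term $g'(t)\big(\bar{X}_t-X_0\big)$, with the $X_0$ contributions cancelling --- is exactly the differential form of the paper's integrated product-rule identity for $\int_0^t\lambda(t,s)N_s\,ds+g(t)X_0$. If anything you are more explicit than the paper, which leaves implicit both the passage in (a) from the filtration $\sigma(X_0,N_s;\,s\le t)$ to the full system filtration $\cf_t$ (your independence-of-$W$ argument) and the joint Brownian property of $(W,\tilde{N})$ in (c) (your vanishing cross-bracket argument).
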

   \begin{proof}
   Item (a) follows from a direct application of Lemma \ref{lem.filtration} with $B=N$, $X=X_{0}$ and $\rho=\rho_{N}$.

   We turn to the proof of (b).  First,
   plugging \eref{eqn.nt} into \eref{eqn.1}
    we can write
  \begin{eqnarray}\label{eqn.z1}
 Z_t &=& \int^t_0 h(X_s) ds +\int^t_0 \lambda (t,s)N_s ds + g(t)X_{0} + \tilde{N}_t \,.
\end{eqnarray}
 Recall that
 in \eref{eqn.lam} we have defined $\lambda$ as
  $\lambda (t,s)   = g(t) p(s)+q(s)$. Therefore,  we can write the second and third terms of the right side of \eref{eqn.z1} as
\begin{eqnarray*}
\int^t_0 \lambda (t,s)N_s ds  +g(t)X_{0}
&=&
g(t) \lp\int_{0}^{t}  p(s) N_{s}ds +  X_{0}\rp +   \int^t_0 q(s)N_s ds
\\
&=& g(t) \bar{X}_{t} +\int_{0}^{t} q(s)N_{s}ds,
\end{eqnarray*}
where the second relation stems from \eref{eqn.xb}.
We now apply the elementary relation $\al_{t}\beta_{t}=\al_{0}\beta_{0} + \int_{0}^{t}\al_{s}'\beta_{s}ds + \int_{0}^{t} \al_{s} \beta_{s}' ds$ to the $C^{1}$-functions $\al_{t}=g(t)$ and $\beta_{t}=\bar{X}_{t}$, which  yields
\begin{eqnarray}\label{eqn.lamx}
\int^t_0 \lambda (t,s)N_s ds  +g(t)X_{0}&=&
\int_{0}^{t}g'(s) \bar{X}_{s}ds + \int_{0}^{t}g(s) p(s) N_{s}ds +   \int^t_0 q(s)N_s ds
\nonumber
\\
&=&
\int_{0}^{t}g'(s) \bar{X}_{s}ds + \int_{0}^{t}r(s)N_s ds,
\end{eqnarray}
where the last equality is due to the definition \eref{eqn.r} of $r$.
Reporting \eref{eqn.lamx} into \eref{eqn.z1}, and taking the definition \eref{eqn.k} of $k$ into account,
\begin{eqnarray*}
Z_t &=& \int^t_0 h(X_s) ds +
\int_{0}^{t}g'(s) \bar{X}_{s}ds + \int_{0}^{t}r(s)N_s ds
 + \tilde{N}_t
 \\
 &=&  \int_{0}^{t} k(U_{s}) ds +  \tilde{N}_{t} ,
\end{eqnarray*}
which is     equation in \eref{eqn.model}.

In the following, we derive the equation \eref{eqn.U} for $U$. Note again that by \eref{eqn.nt} and taking into account \eref{eqn.lamx} we obtain
\begin{eqnarray}\label{eqn.n}
       N_t &=&
 \int^t_0 \lambda (t,s)N_s ds  +g(t)X_{0} + \tilde{N}_t
 \nonumber
 \\
 &=&
  \int_{0}^{t}g'(s) \bar{X}_{s}ds + \int_{0}^{t}r(s) N_{s}ds     + \tilde{N}_t
   .
\end{eqnarray}
In order to get the equation for the process $U$ given by \eref{eqn.coeff}, it is thus sufficient to combine equation \eref{eqn.1} for $X$, equation \eref{eqn.xb} for $\bar{X}$ and relation \eref{eqn.n} for $N$.
\end{proof}

In Lemma \ref{lem.nmodel}, let us highlight again the fact that the new system \eref{eqn.U}-\eref{eqn.model} is governed by a $\cf$-Brownian motion $(W, \tilde{N})$. Hence we have reduced our anticipative problem to a classical filtering equation with modified coefficients. In order to give specific statements in this context, we now recall some basic notation.

\begin{notation}
In the context of Lemma \ref{lem.nmodel}, set
\begin{eqnarray*}
M_{t} =
\exp\lp \int_{0}^{t} k(U_{s})^{T}d Z_{s} -\frac12 \int_{0}^{t} |k(U_{s})|^{2} ds \rp.
\end{eqnarray*}
We also denote by $\tilde{\PP}$   the measure on $\Omega$ that is absolutely continuous with respect to $\PP$ with a  Radon-Nickodym derivative on $(\Omega, \cf_{t})$ given by:
 \begin{eqnarray*}
  \frac{d\tilde{\PP}}{d\PP}\Big|_{\cf_{t}} = M_{t}^{-1},
\end{eqnarray*}
where recall that $\cf_{t}  $ is the system filtration.
\end{notation}

With our modified setting in hand, the classical filtering results contained e.g. in \cite{X} yield the following result.
\begin{thm}
Let $(X, Z)$ be the solution of \eref{eqn.1} and let $U$ be as in Lemma \ref{lem.nmodel}. Then

(a) The  optimal filter $\pi_{t}:\langle \pi_{t}, f \rangle = \EE ( f( U_{t})| \cf_{t}^{Z} )$ satisfies the following nonlinear stochastic differential equation  on $[0, T]$:
\begin{eqnarray*}
\langle \pi_{t}, f \rangle =  \langle \pi_{0}, f \rangle +\int_{0}^{t} \langle \pi_{s}, L f \rangle   ds + \int_{0}^{t} (
\langle \pi_{s}, \nabla fc+f k^{T} \rangle -\langle \pi_{s}, f \rangle \langle \pi_{s}, k^{T} \rangle
 ) d\nu_{s},
\end{eqnarray*}
for all $f \in C^{2}_{b}(\RR^{d})$,
where $\nu$ is the innovation process defined by  $\nu_{t} = Z_{t} -\int_{0}^{t} \langle \pi_{s}, k \rangle  ds$ and where $\nabla f$ stands for the vector $\nabla f = (\partial_{1}f,\dots, \partial_{d}f )$.

 (b) 
Let $V_{t}$ be the unnormalized filter, defined by:
\begin{eqnarray*}
\langle V_{t}, f \rangle = \tilde{\EE} ( M_{t} f(U_{t})| \cf_{t}^{Z} ),
\end{eqnarray*}
where $\tilde{\EE}$ refers to the expectation with respect to the measure $\tilde{\PP}$. Then $V$   satisfies the following linear stochastic differential equation (usually called Zakai's equation):
\begin{eqnarray*}
\langle V_{t}, f \rangle = \langle V_{0}, f \rangle + \int_{0}^{t} \langle V_{s}, Lf \rangle ds + \int_{0}^{t} \langle V_{s}, \nabla f c + f k^{T} \rangle    dZ_{s} ,
\end{eqnarray*}
where the second order differential operator $L$ is defined by
\begin{eqnarray}\label{eqn.L}
Lf = \frac12 \sum_{i,j=1}^{d} A_{ij} \partial_{ij}^{2} f + \sum_{i=1}^{d} b_{i}\partial_{i} f.
\end{eqnarray}
In \eref{eqn.L}, we have also set
  $A= cc^{T}+\si\si^{T}$,   and $d$ is the dimension of $U$. 

\end{thm}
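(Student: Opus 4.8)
The plan is to observe that Lemma~\ref{lem.nmodel} has already accomplished the essential reduction: the anticipative system \eqref{eqn.1} has been rewritten as the adapted signal-observation system \eqref{eqn.U}--\eqref{eqn.model}, which is driven by the genuine $\cf$-Brownian motion $(W,\tilde N)$. Consequently the statement becomes an instance of the classical nonlinear filtering theory for systems with \emph{correlated} signal and observation noises, and my strategy is to apply the corresponding Kushner--FKK and Zakai theorems from \cite{X} to the triple $(U, Z, (W,\tilde N))$ after checking that the structural hypotheses are met.

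First I would record the correlation structure of the driving noise. Since $U$ is driven by $c\,d\tilde N_t + \si\,dW_t$ while $Z$ is driven by $d\tilde N_t$, the signal and observation share the common Brownian motion $\tilde N$; in particular the joint quadratic covariation satisfies $d\langle U, Z\rangle_t = c\,dt$. Using that $W$ and $\tilde N$ are independent, which follows from the joint Gaussianity in Hypothesis~\ref{hyp.1} together with the decorrelation of $W$ from both $N$ and $X_{0}$, the generator of $U$ is exactly $L$ with $A = cc^T + \si\si^T$ as in \eqref{eqn.L}, with no cross term. I would also verify the integrability conditions required by \cite{X}: the coefficients $b$ and $k$ are affine in the auxiliary variables $\bar X, N$ with continuous, hence bounded, time-dependent matrices $g', r, p$ on $[0,T]$, while $a$ and $h$ are assumed regular enough, so that $M_t$ is a true martingale and Girsanov's theorem applies.

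With these verifications in place, the derivation of part (b) follows the standard route. Under the reference measure $\tilde\PP$ the process $Z$ becomes a Brownian motion, and the Kallianpur--Striebel formula identifies $\langle V_t, f\rangle$ as the unnormalized conditional expectation $\tilde{\EE}(M_t f(U_t)\mid\cf^Z_t)$. Applying It\^o's formula to $M_t f(U_t)$ and projecting onto $\cf^Z_t$ produces the Zakai equation; the key point is that the term $\langle V_s, \nabla f\,c\rangle\,dZ_s$ arises precisely from the cross-variation $d\langle U, Z\rangle_t = c\,dt$, that is, from the correlation between signal and observation noise, while $\langle V_s, f k^T\rangle\,dZ_s$ is the usual innovation term. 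Part (a) is then obtained from part (b) by normalization: writing $\langle\pi_t, f\rangle = \langle V_t, f\rangle/\langle V_t, 1\rangle$ and applying It\^o's formula to this ratio yields the Kushner--FKK equation, with the innovation process $\nu_t = Z_t - \int_0^t\langle\pi_s, k\rangle\,ds$ emerging as the driving $\cf^Z$-Brownian motion.

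The main obstacle I anticipate is bookkeeping the correlated-noise structure correctly rather than any deep analytic difficulty: one must keep track of the shared $\tilde N$ in both equations so that the additional $\nabla f\,c$ term appears, and must confirm that the time-inhomogeneous, anticipatively modified coefficients $b$ and $k$ still satisfy the hypotheses of the cited filtering theorem. Once the independence of $W$ and $\tilde N$ and the identification $A = cc^T + \si\si^T$ are secured, the remainder is a direct transcription of the classical correlated-noise filtering equations to the present augmented system.
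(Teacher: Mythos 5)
Your proposal is correct and follows essentially the same route as the paper: the paper offers no detailed proof at all, simply stating that once Lemma \ref{lem.nmodel} recasts the anticipative system as the adapted, correlated-noise system \eref{eqn.U}--\eref{eqn.model} driven by the $\cf$-Brownian motion $(W,\tilde{N})$, the classical filtering results of \cite{X} yield both the Kushner--FKK and Zakai equations. Your additional verifications (the cross-variation $d\langle U,Z\rangle_t = c\,dt$ producing the $\nabla f\,c$ term, the identification $A=cc^{T}+\si\si^{T}$, and the Girsanov/Kallianpur--Striebel normalization step) are exactly the details the paper leaves implicit in that citation.
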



\section{Anticipative filtering equation: Linear case}\label{section.linear}

This section is devoted to a particularization of problem \eref{eqn.1} to a linear context. As usual in filtering theory, we will see that more explicit solutions to the filtering problem can be computed in this case. We also study the asymptotic stability of the filter in this framework.

\subsection{Filter equations}
Let us specify the filtering system we consider in this linear case. Namely, the couple $(X, Z)$ is assumed to satisfy the following system:
\begin{eqnarray}
 X_t&=& X_{0}+\int_{0}^{t} a(s)X_s ds + \si_{0} W_t,
 \label{eqn.linear1}
 \\
 Z_t &=& \int^t_0 h(s)X_s ds + N_t \,,
 \label{eqn.linear}
\end{eqnarray}
where $X_{t}\in \RR^{m}$, $Z_{t}\in \RR^{n}$, $W_{t}\in \RR^{l}$, $a(s)\in \RR^{m\times m}$, $\si_{0}\in \RR^{m\times l}$, and $h(s)\in \RR^{n\times m}$.

In our linear context, we still define
  $\bar{X}$   by \eref{eqn.xb}. We also define an
  $\RR^{2m+n}$-valued
   augmented signal $U$ and some augmented coefficients $b$, $\si$, $c$ and $k$ by
\begin{eqnarray*}
U =
\lc
\begin{array}{c}
 X
\\
 \bar{X}
\\
 N
\end{array}
\rc,
\quad\quad
b (t) =
\lc
\begin{array}{ccc}
a(t) &0&0
\\
0&0&p(t)
\\
0&g'(t)& r(t)
\end{array}
\rc,
\quad\quad
\si =
\lc
\begin{array}{c}
 \si_{0}
\\
 0
\\
 0
\end{array}
\rc ,
\quad\quad
c =
\lc
\begin{array}{c}
 0
\\
 0
\\
 I_{n}
\end{array}
\rc ,
\end{eqnarray*}
and
\begin{eqnarray*}
 k(t) = ( h(t) ,  g'(t) ,  r(t) ),
\end{eqnarray*}
where $a$ is defined by \eref{eqn.linear1}, $p$ and $g$ are introduced in Lemma \ref{lem.filtration} and $r$ is given in Lemma~\ref{lem.nmodel}.
It then follows from Lemma \ref{lem.nmodel} that the linear system \eref{eqn.linear} is equivalent to the following regular Kalman-Bucy signal-observation system:
\begin{eqnarray}\label{eqn.nlinear}
dU_{t} &=& b(t) U_{t} dt + cd\tilde{N}_{t} +\si dW_{t}
\nonumber
\\
dZ_{t} &=& k(t) U_{t}  dt + d\tilde{N}_{t} .
\end{eqnarray}
Let us  also
recall that
for the linear filtering problem \eref{eqn.nlinear}, the optimal filter $\pi_{t}$ is obtained as the following regular conditional law:
\begin{eqnarray}\label{eqn.pilinear}
\pi_{t} = \cn (\hat{U}_{t}, P_{t}),
\end{eqnarray}
where $\hat{U}_{t} = \EE[U_{t}| \cf_{t}^{Z}]$ and $P_{t} = \EE( (U_{t}-\hat{U}_{t}) (U_{t}-\hat{U}_{t})^{T} | \cf_{t}^{Z} )$ designates the conditional variance of $U_{t}$ given $\cf_{t}^{Z}$. The following theorem specifies the expressions of $\hat{U}$ and $P$:
\begin{thm}\label{thm.linear}
Let $\hat{U}$ and $P$ be the conditional mean and covariance of $U$ given by \eref{eqn.pilinear}. Then:

(i) $\hat{U}_{t}$ solves the equation
\begin{eqnarray}\label{eqn.uh}
\hat{U}_{t} &=& \hat{U}_{0} + \int_{0}^{t} b(t) \hat{U}_{s}ds + \int_{0}^{t} (c+P_{s}k(s)^{T})d\nu,
\end{eqnarray}
where the innovation process $\nu$ is given by $\nu_{t} = Z_{t} - \int_{0}^{t} k(s) \hat{U}_{s}ds $.

(ii)
The $\RR^{2m+n, 2m+n}$-valued conditional variance $P$ satisfies a Riccati equation of the form:
\begin{eqnarray}\label{eqn.P}
P_{t}' = P_{t} b(t)^{T} + b(t)P_{t} +A - (c+P_{t}k(t)^{T})(c+P_{t}k(t)^{T})^{T} ,
\end{eqnarray}
where $A=cc^{T}+\si\si^{T}$ as in equation \eref{eqn.L}.
\end{thm}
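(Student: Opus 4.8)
The plan is to lean on the reduction carried out in Lemma \ref{lem.nmodel}. By part (c) of that lemma, the augmented pair $(U,Z)$ obeys the linear system \eref{eqn.nlinear}, which is now driven by $(W,\tilde N)$ --- a genuine $\cf$-Brownian motion. We are therefore no longer dealing with an anticipative problem but with a completely standard linear-Gaussian signal-observation model having deterministic, time-dependent coefficient matrices $b(t)$, $\si$, $c$ and $k(t)$. Because the system is linear and the initial datum together with the driving noise are jointly Gaussian, the conditional law $\pi_{t}$ is Gaussian, which is exactly the content of \eref{eqn.pilinear}. Hence $\pi_{t}$ is entirely determined by $\hat U_{t}$ and $P_{t}$, and it suffices to produce their evolution equations.

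Before invoking any filtering formula I would record the correlation structure of the two noises in \eref{eqn.nlinear}, since this is what makes the present filter differ from the textbook one. The signal noise is $\si\,dW_{t}+c\,d\tilde N_{t}$ whereas the observation noise is $d\tilde N_{t}$; consequently the instantaneous signal-noise covariance is $\si\si^{T}+cc^{T}=A$, the observation-noise covariance is $I_{n}$, and --- crucially --- the cross-covariance between signal and observation noise equals $c$. This nonzero cross term is precisely the origin of the extra summand $c$ in the gain $c+P_{t}k(t)^{T}$ appearing in \eref{eqn.uh}.

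With this in hand there are two equivalent routes. The direct one is to quote the Kalman-Bucy theorem for correlated signal and observation noises (as in \cite{X} or \cite{LS}): matching coefficients $A_{t}=b(t)$, $BB^{T}=A$, $H_{t}=k(t)$, $DD^{T}=I_{n}$ and cross term $BD^{T}=c$ yields the gain $K_{t}=(c+P_{t}k(t)^{T})(I_{n})^{-1}=c+P_{t}k(t)^{T}$, and reading off the associated Riccati equation gives \eref{eqn.P}. The self-contained route, which I would sketch for completeness, is the innovations method: first show that $\nu_{t}=Z_{t}-\int_{0}^{t}k(s)\hat U_{s}\,ds=\int_{0}^{t}k(s)(U_{s}-\hat U_{s})\,ds+\tilde N_{t}$ is a continuous $\cf^{Z}$-martingale whose quadratic variation is $t\,I_{n}$, so that by L\'evy's characterization it is an $\cf^{Z}$-Brownian motion. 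Representing the $\cf^{Z}$-martingale part of $\hat U_{t}$ as a stochastic integral against $\nu$ and identifying the integrand through the noise correlation $c$ produces \eref{eqn.uh}. For the covariance, set $e_{t}=U_{t}-\hat U_{t}$, note that by Gaussianity $P_{t}$ is deterministic, apply It\^o's formula to $e_{t}e_{t}^{T}$ using $de_{t}=(b(t)-K_{t}k(t))e_{t}\,dt+(c-K_{t})\,d\tilde N_{t}+\si\,dW_{t}$, and take expectations; substituting $K_{t}=c+P_{t}k(t)^{T}$ and simplifying collapses the result to \eref{eqn.P}.

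The main obstacle is not analytic but bookkeeping: one must track the correlation between the signal and observation noise --- the $c\,d\tilde N_{t}$ term in \eref{eqn.nlinear} --- throughout, since it is exactly what produces the $c$ inside the gain and the cross term hidden in $(c+P_{t}k(t)^{T})(c+P_{t}k(t)^{T})^{T}$ in the Riccati equation; using the decorrelated textbook filter here would give the wrong coefficients. A secondary, routine technicality arises only if one instead specializes the nonlinear Kushner-FKK equation of the previous section to the test functions $f(u)=u_{i}$ and $f(u)=u_{i}u_{j}$, which are not in $C^{2}_{b}$: justifying this requires a truncation argument exploiting the Gaussian tails of $\pi_{t}$.
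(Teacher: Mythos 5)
Your proposal is correct and follows essentially the same route as the paper: the paper's proof consists precisely of the reduction to the augmented system \eref{eqn.nlinear} via Lemma \ref{lem.nmodel}, followed by an appeal to the standard Kalman--Bucy theory of \cite[Chapter 9]{X}. The only difference is that you spell out the correlated-noise bookkeeping (the signal--observation cross-covariance $c$ producing the gain $c+P_{t}k(t)^{T}$ and the corresponding quadratic term in \eref{eqn.P}), which the paper delegates entirely to the citation.
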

\begin{proof}
Once expression \eref{eqn.nlinear} is given for the augmented Kalman filter, our result is obtained as in the standard case, see e.g. \cite[Chapter 9]{X}.
\end{proof}

\subsection{Asymptotic stability}
We now particularize our situation to a linear context with constant coefficients. That is, we consider the following signal-observation system:
\begin{eqnarray}\label{eqn.const}
 X_t&=& X_{0}+\int_{0}^{t} a X_s ds +  W_t,
 \nonumber
 \\
 Z_t &=& \int^t_0 h X_s ds + N_t \,,
\end{eqnarray}
where $a\in \RR^{m\times m}$ and $h\in \RR^{n\times m}$ are constant matrices.
In order to state and prove our asymptotic stability result, we first need to recall some classical notions which can be found in \cite[Theorem 4.11]{KS} or \cite[Chapter 9]{X}.

\begin{Def}
In the following, $A$ stands for a $m\times m$ matrix, while $D\in\RR^{n\times m}$ and $B\in\RR^{m\times l}$ for given integers $l,m,n$.

\noindent
(i)
We define \emph{the stable subspace of matrix $A$}  as the direct sum of the (right) kernels of $(\lambda_{i}I-A)^{m_{i}}$, where $\lambda_{i}$ are negative eigenvalues of $A$ and $m_{i}$ is the multiplicity of $\lambda_{i}$.
We define \emph{the unstable subspace of $A$}  as the orthogonal of the stable   subspace of $A$.
 
\noindent
(ii)
 We call the couple of matrices $(A,D)$ \emph{detectable} if the  (right) kernel of 
 $$
 \lc\begin{array}{c} D\\DA\\ \vdots \\ DA^{m-1}\end{array}\rc   
 $$ 
is contained in the stable subspace of $A$. 

\noindent
(iii) 
We call   the couple of matrices $(A, B)$   \emph{stabilizable} if the unstable subspace of $A$ is contained in the  linear space spanned by the columns of $(B, AB, \cdots, A^{m-1}B)$.

\end{Def}

With the preliminary notions we have just introduced, we can state a result about existence of solutions for Riccati equations.
\begin{lemma}\label{lem.stable}
Let $a$ and $h$ be the coefficients given in \eref{eqn.const}. We assume that $(a, h)$ is detectable and $(a, I)$ is stabilizable. Then

(i) The algebraic Riccati equation 
\begin{equation}\label{eqn.ga}
\begin{split}
  \gamma_{\infty} a^T + a \gamma_{\infty} + I - \gamma_{\infty} h^Th\gamma_{\infty}
=0
 \end{split}
\end{equation}
 admits a unique solution $\ga_{\infty}$ in $\mathbb{R}^{m\times m}$.

(ii) We have
\begin{eqnarray}\label{eqn.lam0}
\lambda_{0}\equiv
\inf \{ -Re \lambda: \lambda \text{ is an eigenvalue of the matrix $a-\ga_{\infty}h^{T}h$} \} >0.
\end{eqnarray}

\end{lemma}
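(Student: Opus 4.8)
The plan is to identify \eref{eqn.ga} as the algebraic Riccati equation of a linear--quadratic control problem dual to the filtering system \eref{eqn.const}, and then to quote the classical existence, uniqueness and stabilizability theorem for such equations, e.g. \cite[Theorem 4.11]{KS} or \cite[Chapter 9]{X}. Setting $P=\ga_{\infty}$ and observing that any candidate covariance solution is symmetric, I would rewrite \eref{eqn.ga} in the control normal form $\tilde A^{T}P+P\tilde A+\tilde Q-P\tilde B\tilde B^{T}P=0$ under the dictionary $\tilde A=a^{T}$, $\tilde B=h^{T}$ and $\tilde Q=I$; indeed with these choices $\tilde A^{T}P+P\tilde A=aP+Pa^{T}$ and $P\tilde B\tilde B^{T}P=Ph^{T}hP$, so the equation is precisely the Riccati equation governing the infinite-horizon regulator for the pair $(\tilde A,\tilde B)$ with state weight $\tilde Q$.

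Next I would translate the hypotheses of the lemma through the standard duality between the rank conditions of the Definition above. The transposition identities $(A,B)\text{ stabilizable}\iff(A^{T},B^{T})\text{ detectable}$ and $(A,D)\text{ detectable}\iff(A^{T},D^{T})\text{ stabilizable}$ show that detectability of $(a,h)$ is the same as stabilizability of $(\tilde A,\tilde B)=(a^{T},h^{T})$, while stabilizability of $(a,I)$ is the same as detectability of $(\tilde A,\tilde Q^{1/2})=(a^{T},I)$. These are exactly the two conditions required to invoke the cited Riccati theorem, which then furnishes a unique symmetric positive semidefinite solution $P=\ga_{\infty}\in\RR^{m\times m}$; this proves item (i), the uniqueness being understood within the class of symmetric nonnegative matrices (equivalently, the unique stabilizing solution).

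For item (ii) I would use the second conclusion of the same theorem, namely that the stabilizing solution renders the closed-loop matrix $\tilde A-\tilde B\tilde B^{T}P=a^{T}-h^{T}h\ga_{\infty}$ Hurwitz. Transposing and using $\ga_{\infty}^{T}=\ga_{\infty}$ shows that $a-\ga_{\infty}h^{T}h$ is Hurwitz as well, so every eigenvalue $\lambda$ satisfies $\mathrm{Re}\,\lambda<0$. Since $a-\ga_{\infty}h^{T}h$ has only finitely many eigenvalues, the infimum in \eref{eqn.lam0} is attained and strictly positive, which is precisely $\lambda_{0}>0$.

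The computation is routine once this reduction is in place, so the only point needing genuine care is the duality bookkeeping: one must verify that the Popov--Belevitch--Hautus (or Kalman rank) characterizations of detectability and stabilizability transpose as claimed, and that the sign conventions and the placement of $I$ and $h^{T}h$ in \eref{eqn.ga} match the normal form assumed in the reference. Should a self-contained argument be preferred to quoting the theorem, I would instead study the finite-horizon Riccati differential equation $P_{t}'=P_{t}a^{T}+aP_{t}+I-P_{t}h^{T}hP_{t}$ started from $P_{0}=0$, establish monotonicity and an upper bound (the bound coming from stabilizability, the stability of the limit from detectability), and identify $\ga_{\infty}=\lim_{t\to\infty}P_{t}$; this reproves the cited result directly and is the more laborious route.
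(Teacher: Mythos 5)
Your proposal is correct and coincides with what the paper does: the paper offers no proof of Lemma \ref{lem.stable} at all, presenting it as a recollection of classical facts from \cite[Theorem 4.11]{KS} and \cite[Chapter 9]{X}, which is precisely the theorem you invoke. Your added value is simply making the implicit duality bookkeeping explicit (rewriting \eref{eqn.ga} as the control-form Riccati equation for $(a^{T},h^{T})$ with weight $I$, transposing the detectability/stabilizability hypotheses, and transposing back the Hurwitz property of the closed-loop matrix), together with the correct observation that uniqueness should be understood within the class of symmetric nonnegative (equivalently, stabilizing) solutions rather than over all of $\RR^{m\times m}$ as the statement literally reads.
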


In the classical situation (i.e. for $X_{0}$ independent of $N$) and for a system like \eref{eqn.const}, it is well-known that the optimal filter   $\hat{X}_{t} = \EE(X_{t}|\cf^{Z}_{t})$ converges exponentially fast to
the solution of the algebraic Riccati equation \eref{eqn.ga}.
Our aim now is to prove that this convergence still holds true when the covariance between $X_{0}$ and $N$   vanishes in finite time.

 \begin{theorem}
 Consider the signal-observation system given by \eref{eqn.const} under the same conditions as for Lemma \ref{lem.stable}. We also assume that  there exists $T_{0}>0$ such that $\rho'(t)=0$ for all $t> T_{0}$. 
 Then

 (a)
 Let $P_{t}^{11}$ be the conditional variance of $X_{t}$ given $\cf_{t}^{Z}$. For all $\lambda<\lambda_{0}$, where $\lambda_{0}$ is defined by \eref{eqn.lam0}, we have
 \begin{eqnarray}\label{eqn.P11.lim}
\lim_{t\to\infty} e^{\lambda t} (P^{11}_{t} - \ga_{\infty}) =0.
\end{eqnarray}

(b)
Let $(\hat{X}^{0}_{t})$ be the optimal conditional expectation of $X_{t}$ given the observation obtained when $X_{0}$ and $N$ are independent (that is, $\rho_{N}=0$). Then for all   $\lambda<\lambda_{0}$
we have
  the convergence
 $ \lim_{t\to\infty} e^{\lambda t} (\hat{X}_{t} -  \hat{X}^{0}_{t}) = 0 $ almost surely.

(c) Let $\pi_{t}$ be the conditional Gaussian probability measure with mean $\hat{X}_{t}$ and covariance matrix  $P_{t}$.
In the same way, we define a conditional Gaussian measure $\pi^{0}_{t}$ given as $\pi^{0}_{t} = \cn (\hat{X}^{0}_{t}, P^{0}_{t})$,
where $\hat{X}^{0}_{t}$ is defined in item (b) above and $P^{0}_{t}$ is the conditional covariance of the usual Kalman filter(see e.g. \cite[Equation (9.8)]{X}).   Then we have
\begin{eqnarray*}
\lim d_{W}(\pi_{t} , {\pi}^{0}_{t}) \to 0, \text{ almost surely,}
\end{eqnarray*}
where $d_{W}$ denotes the Wasserstein metric in the space of probability measures.
 \end{theorem}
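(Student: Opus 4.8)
The plan is to exploit the fact that all the anticipative correction terms switch off after time $T_0$, so that the three claims reduce to classical Kalman--Bucy asymptotics plus a perturbation analysis. I would begin by recording the key structural simplification. Since $\rho'(t)=0$ for $t>T_0$, Corollary \ref{cor.g} gives $g'(t)=0$ for $t\geq T_0$, whence $r(t)=-g'(t)\rho'(t)^T=0$ and $p(t)=\rho''(t)^T=0$ on $(T_0,\infty)$. Consequently the augmented matrices of Lemma \ref{lem.nmodel} collapse to $b(t)=\diag(a,0,0)$ and $k(t)=(h,0,0)$ for $t>T_0$, so that the signal block $X$ decouples from the auxiliary blocks $\bar X,N$.

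For (a), I would insert this block form into the Riccati equation \eref{eqn.P}. Writing $P_t$ in $3\times3$ blocks, a direct inspection shows that for $t>T_0$ the $(1,1)$ block obeys the closed, standard matrix Riccati ODE
\[
(P^{11}_t)'=aP^{11}_t+P^{11}_t a^T+I_m-P^{11}_t h^Th\,P^{11}_t,
\]
decoupled from the other blocks. Since $(a,h)$ is detectable and $(a,I)$ stabilizable (Lemma \ref{lem.stable}), the classical global convergence theorem for the Riccati flow (as in \cite{KS,X}) applies from the admissible datum $P^{11}_{T_0}\geq0$: one gets $P^{11}_t\to\ga_\infty$ with rate governed by the closed-loop matrix $A_c:=a-\ga_\infty h^Th$, namely $\|P^{11}_t-\ga_\infty\|\leq C(1+t)^N e^{-2\lambda_0 t}$. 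As $2\lambda_0>\lambda_0$, claim (a) follows, and the same theorem gives $P^0_t\to\ga_\infty$ at the same rate.

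For (b), I would project the filter equation \eref{eqn.uh} onto the $X$ component; for $t>T_0$ it reads $d\hat X_t=a\hat X_t\,dt+P^{11}_t h^T\,d\nu_t$ with $d\nu_t=dZ_t-h\hat X_t\,dt$, while the non-anticipative filter obeys $d\hat X^0_t=a\hat X^0_t\,dt+P^0_t h^T\,d\nu^0_t$ with $d\nu^0_t=dZ_t-h\hat X^0_t\,dt$. Setting $\delta_t=\hat X_t-\hat X^0_t$ and $\Delta_t=P^{11}_t-P^0_t$ and using $d\nu^0_t=d\nu_t+h\delta_t\,dt$, the cross terms collapse and yield the linear SDE
\[
d\delta_t=(a-P^0_t h^Th)\delta_t\,dt+\Delta_t h^T\,d\nu_t,\qquad t>T_0,
\]
where $\nu$ is the innovation Brownian motion, the drift $a-P^0_t h^Th\to A_c$ is eventually Hurwitz with margin $\lambda_0$, and $\Delta_t\to0$ at rate $2\lambda_0$ by (a). Writing $\Phi(t,s)$ for the transition matrix of this drift (which satisfies $\|\Phi(t,s)\|\leq M e^{-\mu(t-s)}$ for every $\mu<\lambda_0$, by a standard Gronwall/roughness perturbation of the stable semigroup $e^{A_c\cdot}$), variation of constants gives $\delta_t=\Phi(t,T_0)\delta_{T_0}+\int_{T_0}^t\Phi(t,s)\Delta_s h^T\,d\nu_s$. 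The deterministic term decays like $e^{-\mu t}$; since $P$ is deterministic, the stochastic convolution is a centered Gaussian vector and the It\^o isometry gives $L^2$-norm $\leq Ce^{-\mu t}$. I expect the \emph{almost-sure} upgrade to be the technical heart: the convolution is not a martingale because its integrand depends on $t$, so I would sample at integer times and apply a Gaussian tail bound with Borel--Cantelli, then control the oscillation on each interval $[n,n+1]$ by splitting off $\Phi(t,n)I_n$ and applying Doob's inequality to the genuine martingale $\int_n^t\Phi(n,s)\Delta_s h^T\,d\nu_s$ (on a unit interval $\|\Phi(n,\cdot)\|$ is bounded, so the eigenvalue spread of $A_c$ is harmless). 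This yields $e^{\lambda t}\delta_t\to0$ a.s. for every $\lambda<\mu$, hence for every $\lambda<\lambda_0$.

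Finally, for (c), with $\pi_t=\cn(\hat X_t,P^{11}_t)$ and $\pi^0_t=\cn(\hat X^0_t,P^0_t)$ I would invoke the explicit Bures--Wasserstein formula
\[
d_W(\cn(m_1,C_1),\cn(m_2,C_2))^2=|m_1-m_2|^2+\tr\big(C_1+C_2-2(C_1^{1/2}C_2C_1^{1/2})^{1/2}\big),
\]
whose covariance term is a continuous function of $(C_1,C_2)$ vanishing on the diagonal. Part (a) gives $P^{11}_t-P^0_t\to0$ and part (b) gives $\hat X_t-\hat X^0_t\to0$ a.s., so both contributions vanish and $d_W(\pi_t,\pi^0_t)\to0$ almost surely. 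The main obstacle throughout remains the a.s. (as opposed to $L^2$) exponential bound in (b), which forces the unit-interval martingale decomposition together with Borel--Cantelli, and which requires justifying the exponential decay of the time-varying $\Phi$ from the convergence of its generator to the Hurwitz matrix $A_c$.
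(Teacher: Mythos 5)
Your proposal is correct and follows essentially the same route as the paper's proof: exploit $\rho'(t)=0$ for $t>T_{0}$ to get $g'(t)=r(t)=0$, so the projected equations for $\hat{X}_{t}$ and $P^{11}_{t}$ close up and coincide with the classical Kalman--Bucy filter and Riccati equations after time $T_{0}$, and then conclude by classical asymptotic stability theory for the constant-coefficient case. The only difference is one of packaging: where the paper simply cites \cite[Theorem 4.11]{KS} for part (a) and \cite[Section 2]{OP} (see also \cite[Section 9.5]{X}) for parts (b) and (c), you reprove those cited classical facts directly (variation of constants plus Borel--Cantelli for the almost-sure exponential decay, and the Gaussian Wasserstein formula for (c)), which makes the argument more self-contained but is not a different method.
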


\begin{proof}
It follows from Theorem \ref{thm.linear} that the filter equations for
the augmented version of equation
 \eref{eqn.const} is:
\begin{eqnarray}
d\hat{U}_{t} &=& ( b(t)  - ck(t) -P_{t}k(t)^{T}k(t)  ) \hat{U}_{t}dt + (c+P_{t} k(t)^{T}) d {Z_{t}},
\label{eqn.U.const}
\\
\dot{P}_{t}  &=& P_{t} (b(t) - ck(t))^{T} + (b(t) - ck(t)) P_{t} +\si\si^{T} - P_{t} k(t)^{T}k(t)P_{t} ,
\label{eqn.P.const}
\end{eqnarray}
where $U$, $b$, $\si$ and $c$ are respectively defined by:
\begin{eqnarray*}
U =
\lc
\begin{array}{c}
 X
\\
 \bar{X}
\\
 N
\end{array}
\rc,
\quad\quad
b (t) =
\lc
\begin{array}{ccc}
a  &0&0
\\
0&0&p(t)
\\
0&g'(t)& r(t)
\end{array}
\rc,
\quad\quad
\si =
\lc
\begin{array}{c}
 I_{m}
\\
 0
\\
 0
\end{array}
\rc ,
\quad\quad
c =
\lc
\begin{array}{c}
 0
\\
 0
\\
 I_{n}
\end{array}
\rc ,
\end{eqnarray*}
and $k(t)$ is the matrix given by
\begin{eqnarray*}
 k(t) = [ h  \quad  g'(t) \quad  r(t) ].
\end{eqnarray*}
Observe that the following elementary identities hold true:
\begin{eqnarray}\label{eqn.ck}
ck(t) =  \lc\begin{array}{ccc} 0&0&0\\0&0&0\\h  &g'(t)&r(t) \end{array}\rc ,
\qquad
b(t)-ck(t) =  \lc\begin{array}{ccc} a &0&0\\0&0&p(t)\\-h  &0&0 \end{array}\rc.
\end{eqnarray}
Moreover, recalling that $P_{t}$ is a $\RR^{(2m+n)\times (2m+n)}$ matrix, we decompose $P_{t}$ into blocks of size $k\times l$ with $k, l\in \{m, n\}$ according to the $3$ components of $U$.
Hence, projecting equation~\eref{eqn.U.const} on the $X$ component and recalling \eref{eqn.ck}, it is readily checked that $\hat{X}_{t}$ satisfies
 \begin{eqnarray}\label{eqn.xhat.const}
d\hat{X}_{t} &=& a\hat{X}_{t} dt - (P^{11}h^{T} + P^{12}g'(t)^{T} + P^{13} r(t)^{T} ) (h \hat{X}_{t}  + g'(t) \hat{\bar{X}}_{t} + r(t) \hat{N}  ) dt
\nonumber
\\
&& +    (P^{11}h^{T} + P^{12}g'(t)^{T} + P^{13} r(t)^{T} )  dZ_{t}.
\end{eqnarray}
In the same way, projecting relation \eref{eqn.P.const} on the first component of $U$, we obtain that   $P^{11} = \EE ( (X_{t} - \hat{X}_{t})(X_{t} - \hat{X}_{t})^{T})$ is
\begin{eqnarray}\label{eqn.P11}
 \dot{P}^{11} &=& P^{11}a^{T}+ a P^{11} +I
\nonumber
\\
&& \quad-  (P^{11}h^{T}+ P^{12} g'(t)^{T}+P^{13}r(t)^{T}) (h P^{11}+g'(t)P^{21}+r(t)P^{31}).
\end{eqnarray}

Let us recall that the expression for $g'$ is obtained in Corollary \ref{cor.g} and $r$ is defined by \eref{eqn.r}. Therefore, since    $\rho'(t)=0$ for $t\geq T_{0}$, we easily get that for $t>T_{0}$ we also have $g'(t) =0$ and $r(t)=0$. Plugging this information into \eref{eqn.xhat.const} and \eref{eqn.P11}, the equations for $\hat{X}$ and $P^{11}$ becomes:
\begin{eqnarray*}
d \hat{X}_{t} &=&  a\hat{X}_{t}dt - P_{11} h^{T} h \hat{X}_{t} dt + P_{11}h^{T} dZ_{t}\,,
\\
\dot{P}^{11} &=&P^{11}a^{T}+ a P^{11} +I - P^{11}h^{T} h P^{11} .
\end{eqnarray*}
According to Lemma \ref{lem.stable}, if $(a, h)$ is detectable and $(a, I)$ is stabilizable, then equation  \eref{eqn.ga} has a unique solution. Furthermore, it is   shown in   \cite[Theorem 4.11]{KS}
that under the same conditions we have
\begin{eqnarray*}
\lim_{t\to \infty} e^{\lambda t} (P^{11}_{t}-\ga_{\infty}) =0,
\end{eqnarray*}
which is our claim \eref{eqn.P11.lim}.
Observe also that   Lemma \ref{lem.stable} implies that the matrix $a-\ga_{\infty}h^{T}h$ is asymptotically stable.
Items (b) and (c) in our Theorem thus follow    from the results in \cite[Section 2]{OP} (see also \cite[Section 9.5]{X}).
\end{proof}

\section{A finite filter}

In this section, we consider another application of the methods used for the anticipative filter \eref{eq:filtering-intro}. Namely, inspired by e.g. \cite{Coutin, Kleptsyna}, we wish to handle the case of a weighted Volterra type observation.

To be more specific, we are now considering a signal $(X_{t})_{t\leq T}$ and an observation $(Z_{t})_{t\leq T}$ governed   by the stochastic differential equations
\begin{eqnarray}
  X_t &=&  X_{0} + \int_{0}^{t} a(s)X_s   ds  +        W_t ,
  \label{eqn.vol.signal}
  \\
    Z_t &=&     \int^t_0 H (t, s)  X_s ds
     +        N_s,
     \label{eqn.vol}
\end{eqnarray}
where $(X_0, W_t,N_t)$ is a Gaussian family and the three terms are mutually independent.
As in the previous sections, we
 assume that $(W_{t}, N_t)$ is a standard Brownian motion, and $a: [0, T] \rightarrow \mathbb{R}^{m \times m},        H : [0, T]  ^2 \rightarrow \mathbb{R}$ are continuous functions.
   The observation information is given by the filtration of the observation process: $\cf^Z_t   = \sigma(Z_s; s \leq t)$, $t \in [0, T]$.
   The initial condition $X_{0}$ is assumed to be independent of $N$ in this section. However, the fact that $Z_{t}$ is governed by a Volterra type dynamics will force us to resort to the same augmented filtering equation as in the anticipative case. Observe that an anticipative initial condition in \eref{eqn.vol.signal} could also be treated with our methods. We have refrained from going in this direction for sake of conciseness.

 In order to ease our computations,   we assume that the function $H$ satisfies the following conditions:
\begin{hyp}\label{hyp.vol}
Let $H$ be the kernel appearing in the definition \eref{eqn.vol} of $Z$. We assume the following holds true:

(i) $H$ is a continuous function on $[0, T]^{2}$.

(ii) $H$ admits the following decompositions:
\begin{eqnarray}\label{eqn.H}
 H (t, s) =  \sum_{i=1}^{ \infty}  p_i(t) q_i (s),
\end{eqnarray}
where $p_{i}, q_{i} $ in \eref{eqn.H} are such that $p_{i}\in C^{1} ([0, T])$ and $q_{i} \in C([0, T])$, and where the    convergence in \eref{eqn.H} occurs in   $L_{1}([0,T]^{2})$.

(iii) For $n\geq 1$, set
\begin{eqnarray}\label{eqn.HL}
  H_n(t, s) =  \sum_{i=1}^{n}  p_i(t) q_i (s),  \quad
     L_{n}(t,s) = \frac{d}{dt} H_n(t, s)  =  \sum_{i=1}^{n}  p_i'(t) q_i (s).
\end{eqnarray}
Then   $L_{n} $   converges  in $L_{1}([0, T]^{2})$ to a continuous function $L(t, s)  $.
\end{hyp}

Following is the main result of this section:
\begin{thm}\label{thm.vol}
Consider the signal-observation equation \eref{eqn.vol}. Suppose that $H$ satisfies Hypothesis \ref{hyp.vol}.
For   $0\leq t\leq r$
we define the following augmented observation:
\begin{eqnarray*}
 V_{r,t} =  \lc \begin{array}{c}
X_{t} \\ \int_{0}^{t}L(r,s)X_{s}ds
  \end{array} \rc,
\end{eqnarray*}
as well as the augmented coefficients
\begin{eqnarray*}
&&
    B_{r}(s) = \lc\begin{array}{c} a(s) \\ L (r,s)  \end{array}\rc  [I_{m}\quad 0] ,  \qquad \Si=\lc\begin{array}{c} I_{m}\\ 0 \end{array}\rc
,
\end{eqnarray*}
and an initial condition
\begin{eqnarray*}
 \quad P_{0} =  \lc\begin{array}{cc} \Si&0 \\ 0&0  \end{array}\rc.
\end{eqnarray*}
  Then the
  conditional mean
   $\hat{V}_{r,t} = \EE [V_{r,t}|\cf^{Z}_{t}] $ 
   satisfies the equation
\begin{eqnarray}\label{eqn.vol.V}
\hat{V}_{r,t} = \hat{V}_{r,0}+ \int_{0}^{t} \lc\begin{array}{c} a(s)\\ L(r,s) \end{array}\rc \hat{X}_{ s} ds + \int_{0}^{t}\cp_{r,s}[H(s,s)\quad I]^{T}d \nu,
\end{eqnarray}
where $\nu_{t} = Z_{t} - \int_{0}^{t} [H(s,s)\quad I] \hat{V}_{r,s}ds $,  and where the conditional covariance $ \cp_{r,t}  =   \EE( (V_{r, t}-\hat{V}_{r, t}) (V_{t,t}-\hat{V}_{t,t})^{T} | \cf_{t}^{Z} )$ verifies the Riccati type equation
\begin{eqnarray}\label{eqn.vol.P}
\cp_{r,t}- P_{0}=  \int_{0}^{t} \lp \cp_{r,s}^{T} B_{t}(s)^{T} + B_{r}(s)\cp_{r,s}  +
\Si\Si^{T}
   -   \cp_{r,s}  \lc H(s,s) \quad I \rc^{T}\lc  H(s,s) \quad I \rc     \cp_{t,s}^{T}  \rp ds .
\end{eqnarray}
\end{thm}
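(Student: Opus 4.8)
The plan is to turn the Volterra observation \eref{eqn.vol} into a standard finite-dimensional linear--Gaussian filtering problem, apply the Kalman--Bucy filter of Theorem \ref{thm.linear} (see also \cite{X}), and read off \eref{eqn.vol.V}--\eref{eqn.vol.P} by a linear projection, dealing with the infinite sum in \eref{eqn.H} through a finite-rank approximation. First I would record the dynamics. For a frozen parameter $r$ the inner integral $\int_{0}^{t}L(r,s)X_{s}\,ds$ has an integrand independent of the upper limit, so it differentiates to $L(r,t)X_{t}\,dt$; together with \eref{eqn.vol.signal} this gives $dV_{r,t}=B_{r}(t)V_{r,t}\,dt+\Si\,dW_{t}$ with $V_{r,0}=(X_{0},0)^{T}$, whence the initial covariance $P_{0}$. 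The genuinely Volterra point is the observation, for which I must show $dZ_{t}=[H(t,t)\ I]V_{t,t}\,dt+dN_{t}$, that is $\frac{d}{dt}\int_{0}^{t}H(t,s)X_{s}\,ds=H(t,t)X_{t}+\int_{0}^{t}L(t,s)X_{s}\,ds$. Since Hypothesis \ref{hyp.vol} only grants $p_{i}\in C^{1}$, this Leibniz rule is not available directly; I would prove it first for the finite-rank kernels $H_{n}$ of \eref{eqn.HL}, where $\int_{0}^{t}H_{n}(t,s)X_{s}\,ds=\sum_{i}p_{i}(t)\int_{0}^{t}q_{i}(s)X_{s}\,ds$ is visibly $C^{1}$ in $t$, and then let $n\to\infty$ using the $L_{1}$-convergences $H_{n}\to H$ and $L_{n}\to L$.

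For fixed $n$ the clean step is to introduce the honestly finite-dimensional state $\Phi_{t}=(X_{t},Y^{1}_{t},\dots,Y^{n}_{t})$ with $Y^{i}_{t}=\int_{0}^{t}q_{i}(s)X_{s}\,ds$. Then $d\Phi_{t}=\cb(t)\Phi_{t}\,dt+\tilde\Si\,dW_{t}$ and $dZ_{t}=\ck(t)\Phi_{t}\,dt+dN_{t}$ is a standard observation, so Theorem \ref{thm.linear} yields equations for $\hat\Phi_{t}$ and its conditional covariance $\Pi_{t}$. The augmented observation is a linear image of $\Phi$: writing $V_{r,t}=C(r)\Phi_{t}$ for a projection $C(r)$ depending on $r$ only, one checks the intertwining identities $C(r)\cb(s)=B_{r}(s)C(r)=B_{r}(s)C(s)$, $\ck(s)=[H_{n}(s,s)\ I]C(s)$ and $C(r)\tilde\Si=\Si$. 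Since conditional expectation commutes with the linear map $C(r)$, one gets $\hat V_{r,t}=C(r)\hat\Phi_{t}$ and $\cp_{r,t}=C(r)\Pi_{t}C(t)^{T}$, and projecting the Kalman mean equation through $C(r)$ gives \eref{eqn.vol.V} at once, with innovation $\nu$ and gain $\cp_{r,s}[H(s,s)\ I]^{T}$.

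The delicate point is the Riccati \eref{eqn.vol.P}, and I expect it to be the main obstacle. One cannot simply differentiate $\cp_{r,t}=\EE[e_{r,t}e_{t,t}^{T}]$ (with $e_{a,t}=V_{a,t}-\hat V_{a,t}$) in $t$, because the diagonal error $e_{t,t}$ contains $\int_{0}^{t}L(t,s)X_{s}\,ds$, whose $t$-derivative involves $\partial_{t}L=\partial_{t}^{2}H$, precisely the $C^{2}$-regularity that Hypothesis \ref{hyp.vol} withholds. The device that saves the day is to keep the second Volterra parameter frozen: using the representation $de_{a,t}=B_{a}(t)e_{a,t}\,dt+\Si\,dW_{t}-K_{a}(t)\,d\nu_{t}$, where in the system filtration $d\nu_{t}=[H(t,t)\ I]e_{t,t}\,dt+dN_{t}$ and $W\perp N$, I would apply It\^o's product rule to $e_{r,t}e_{\rho,t}^{T}$ for two frozen parameters $r,\rho$ and take expectations. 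The diagonal then enters only through the gains $K_{a}$ evaluated at the current time, and one obtains a closed equation for $G_{r,\rho}(t)=\EE[e_{r,t}e_{\rho,t}^{T}]$; integrating in $t$ and only then setting $\rho=t$ recovers $\cp_{r,t}$ and yields \eref{eqn.vol.P}, the upper-limit index appearing in $B_{t}$ and $\cp_{t,s}$ being exactly this second frozen parameter evaluated at the endpoint. Equivalently, one may differentiate the two-parameter object $C(r)\Pi_{t}C(\rho)^{T}$ with $\rho$ frozen, which explains why $C(t)^{T}$ is allowed to sit outside the integral.

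The last task is to confirm that the $L_{1}$-convergence of the kernels propagates through the continuous dependence of the Kalman mean and of the Riccati solution on their coefficients, so that the finite-rank identities above pass to the limit and deliver the stated equations for the original kernel $H$. I would expect this limiting step, together with the two-parameter bookkeeping needed to legitimize the non-differentiable diagonal, to absorb most of the technical work, while the algebraic reduction and the projection identities are routine once the intertwining relations are in place.
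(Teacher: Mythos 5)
Your architecture coincides with the paper's proof almost step for step: the paper also works with the finite-rank kernels $H_{n}$, introduces the state $U^{n}=(X,X^{1},\dots,X^{n})$ with $X^{i}_{t}=\int_{0}^{t}q_{i}(s)X_{s}\,ds$ (your $\Phi$), applies the standard Kalman--Bucy equations to $(U^{n},Z^{n})$, and then projects through the block matrix $\diag(I,\bar{h}^{n}(r))$ (your $C(r)$), using exactly your intertwining identities. Your treatment of the Riccati equation is also the paper's: since the finite-dimensional covariance identity \eref{eqn.cov.approx} is already in integrated form, the paper simply multiplies it by $\diag(I,\bar{h}^{n}(r))$ on the left and $\diag(I,\bar{h}^{n}(t))^{T}$ on the right --- this is your ``freeze the second parameter $\rho$, then set $\rho=t$'' device --- so no It\^o product rule on the errors and no differentiation along the diagonal is ever needed.

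The one place where your proposal genuinely diverges, and where it has a gap, is the passage $n\to\infty$. You propose to conclude via ``continuous dependence of the Kalman mean and of the Riccati solution on their coefficients.'' This cannot close the argument, for two reasons. First, the un-projected systems live in dimension $(n+1)m$, which grows with $n$, so coefficient-stability of a fixed-dimension equation does not apply; you would have to work with the projected two-parameter system \eref{eqn.vol.V}--\eref{eqn.vol.P}, for which no existence/uniqueness theory is available (the theorem itself is what produces a solution), so ``the approximate solutions converge to the solution of the limit equation'' is not yet a meaningful statement. Second, and more fundamentally, the objects $\hat{V}^{n}_{r,t}$ and $\cp^{n}_{r,t}$ are conditional moments given $\cf_{t}^{Z^{n}}$, whereas the theorem concerns conditional moments given $\cf_{t}^{Z}$: the conditioning $\sigma$-field changes with $n$, and no amount of ODE stability relates $\EE[\,\cdot\,|\cf_{t}^{Z^{n}}]$ to $\EE[\,\cdot\,|\cf_{t}^{Z}]$. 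The paper's resolution is precisely its Lemma \ref{lem.gau}: for jointly Gaussian families, $y_{n}\to y$ in $L^{2}$ forces $\EE[x|y_{n}]\to\EE[x|y]$ in $L^{2}$, together with convergence of the corresponding second moments. With that lemma, every term of the finite-$n$ identity is identified as a Gaussian conditional moment and converges to the corresponding moment for the true observation $Z$, so the equations \eref{eqn.vol.V}--\eref{eqn.vol.P} are obtained termwise. Your proof needs this (or an equivalent probabilistic convergence statement) inserted in place of the coefficient-stability argument; the remainder of your plan then goes through essentially as written.
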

\begin{proof}
We proceed according to the approximation given in Hypothesis \ref{hyp.vol}. This will be done in two steps.

\noindent \emph{Step 1: High-dimensional augmented signal.} \quad
Consider the signal $X$ given by \eref{eqn.vol.signal}, as well as the following approximation of the observation:
\begin{eqnarray*}
 d X_t &=&   a(t)X_t   dt  +     d W_t ,
\\
    Z^{n}_t &=&     \int^t_0 H_{n} (t, s)  X_s ds
     +        N_s
     =
      \sum_{i=1}^{n} p_{i}(t) X^{i}_{t} +N_{s} ,
\end{eqnarray*}
where we have set 
\begin{eqnarray*}
 X^{i}_{t} =    \int_{0}^{t} q_{i}(s) X_{s}ds, \quad i=1,\dots, n.
\end{eqnarray*}
Then an elementary product rule allows to write
\begin{eqnarray}\label{eqn.zn}
Z^{n}_{t} =   \int_{0}^{t} H_{n}(s,s) X_{s}ds + \sum_{i=1}^{n}\int_{0}^{t} p_{i}'(s) X^{i}_{s} ds +N_{s}.
\end{eqnarray}

We now consider an augmented signal and some augmented coefficients as follows:
 \begin{eqnarray}\label{eqn.Ubar}
\bar{U}^{n}=\lc \begin{array}{c}   X^{1}\\ \vdots \\ X^{n} \end{array} \rc , \quad \bar{b}^{n}(t) = \lc \begin{array}{c}
 q_{1}(t)  \\ \vdots  \\ q_{n}(t)
 \end{array} \rc  , \quad \si = \lc \begin{array}{c}
 I_{m} \\ 0
 \end{array} \rc, \quad \bar{h}^{n} (t) =
 \lc \begin{array}{ccc}
     p_{1}'(t) & \cdots &  p_{n}'(t)
  \end{array} \rc ,
\end{eqnarray}
and we set
\begin{eqnarray}\label{eqn.Unbar}
U^{n} = \lc \begin{array}{c} X\\ \bar{U}^{n} \end{array} \rc , \quad b^{n}(t) = \lc \begin{array}{c}
a(t) \\ \bar{b}^{n}(t)
 \end{array} \rc \lc\begin{array}{cc} I_{m}&0  \end{array}\rc , \quad   h^{n} (t) =
 \lc \begin{array}{cccc}
   H_{n}(t, t) & \bar{h}^{n}(t)
  \end{array} \rc .
\end{eqnarray}
Then we obtain the following linear system for the observation $U^{n}$:
\begin{eqnarray}\label{eqn.Un}
dU^{n}_{t} &=& b^{n}(t) U^{n}_{t} dt + \si dW_{t}
.
\end{eqnarray}
In addition, it is easily seen that the process $Z^{n}$ defined by \eref{eqn.zn} verifies
\begin{eqnarray}\label{eqn.Zn2}
Z^{n}_{t} &=& \int_{0}^{t} h^{n}(s) U^{n}_{s}ds +N_{t}.
\end{eqnarray}

As in Theorem \ref{thm.linear}, equation \eref{eqn.Un} and \eref{eqn.Zn2} can now be seen as a classical  Kalman-Bucy filtering system.
Hence we can invoke \cite[Chapter 9]{X} again, which yields the following equation for    $\hat{U}^{n}_{t} = \EE(U^{n}  | \cf^{Z^{n}}_{t}) $:
\begin{eqnarray}\label{eqn.Unhat}
\hat{U}^{n}_{t} &=& \hat{U}^{n}_{0} + \int_{0}^{t} b^{n}(s) \hat{U}^{n}_{s}ds + \int_{0}^{t}   P^{n}_{s} h^{n}(s)^{T} d\nu^{n},
\end{eqnarray}
where
$\nu^{n}_{t} = Z^{n}_{t} - \int_{0}^{t} h^{n} (s) \hat{U}^{n}_{s}ds $ is the corresponding innovation process. As far as the covariance function
  $P^{n}_{t}  = \EE( (U^{n}_{t}-\hat{U}^{n}_{t}) (U^{n}_{t}-\hat{U}^{n}_{t})^{T}   )$ is concerned,  it  satisfies the following Riccati equation:
\begin{eqnarray}\label{eqn.cov.approx}
P^{n}_{t} -P^{n}_{0} = \int_{0}^{t} \lp P^{n}_{s} b^{n}(s)^{T} + b^{n}(s)P^{n}_{s} + \si\si^{T} -   P^{n}_{s}h^{n}(s)^{T} h^{n}(s) P^{n}_{s}  \rp ds    .
\end{eqnarray}


 \noindent\emph{Step 2: Dimension reduction.}
 In Step 1, the dimension of the augmented signal $U^{n}$ grows with $n$. In order to go back to a low-dimensional signal, let us first compute the quantity $ h^{n}(t)U^{n}_{t}$ in \eref{eqn.Zn2}. Thanks to the definition \eref{eqn.Unbar} of $h^{n}$ and $U^{n}$ we have
 \begin{eqnarray}\label{eqn.hU}
h^{n}(t)U^{n}_{t} &=& H_{n}(t,t) X_{t} + \bar{h}^{n}(t) \bar{U}^{n}_{t}
=H_{n}(t, t)X_{t} +  \sum_{i=1}^{n} p_{i}'(t) \int_{0}^{t} q_{i}(s)X_{s}ds
\nonumber
\\
&=&H_{n}(t, t)X_{t} + \int_{0}^{t}L_{n}(t,s) X_{s}ds,
\end{eqnarray}
where the second equality is due to the definition \eref{eqn.Ubar} of $\bar{h}^{n}$ and the last equality stems from \eref{eqn.HL}.
Interestingly enough,
  equation \eref{eqn.hU}  suggests     to consider the filtering for the signal
 \begin{eqnarray*}
R_{t} :=  \sum_{i=1}^{\infty} p_{i}'(t) \int_{0}^{t} q_{i}(s)X_{s}ds  = \int_{0}^{t}L(t,s)X_{s}ds .
 \end{eqnarray*}
 To this aim, we consider a new process $R^{n}_{r,t}$ defined    for $0\leq t\leq r$ by
  \begin{eqnarray*}
R_{r,t}^{n} =  \sum_{i=1}^{n} p_{i}'(r) \int_{0}^{t} q_{i}(s)X_{s}ds  =    \bar{h}^{n}(r) \bar{U}^{n}_{t} ,
\end{eqnarray*}
  and we consider the following augmented signal (notice that the argument of $\bar{h}^{n}$ is frozen to $r$ in the equation below; see Remark):
 \begin{eqnarray}\label{eqn.Vn}
  V^{n}_{r,t} : = \lc \begin{array}{c}
X_{t} \\ R^{n}_{r,t}
  \end{array} \rc  = \lc \begin{array}{cc} I & 0 \\ 0&\bar{h}^{n}(r)  \end{array} \rc U^{n}_{t}.
\end{eqnarray}

We will now get the filtering equations for the augmented signal $V^{n}_{r,t}$. In order to get the equation for the conditional variance, we set
\begin{eqnarray*}
 \cp^{n}_{r,t}=   \EE( (V^{n}_{r, t}-\hat{V}^{n}_{r, t}) (V^{n}_{t,t}-\hat{V}^{n}_{t,t})^{T} | \cf_{t}^{Z^{n}} ) = \lc \begin{array}{cc} I & 0 \\ 0&\bar{h}^{n}(r)  \end{array} \rc P^{n}_{t} \lc \begin{array}{cc} I & 0 \\ 0&\bar{h}^{n}(t)  \end{array} \rc^{T}
 ,
\end{eqnarray*}
 where the second relation is obtained thanks to the definition \eref{eqn.Vn} of $V^{n}$ and the fact that $P^{n}_{t} = \EE [(U^{n}_{t}-\hat{U}^{n}_{t})(U^{n}_{t}-\hat{U}^{n}_{t})^{T}]$.

Hence multiplying relation \eref{eqn.cov.approx} by   $\lc \begin{array}{cc} I & 0 \\ 0&\bar{h}^{n}(r)  \end{array} \rc$ on the left and by $\lc \begin{array}{cc} I & 0 \\ 0&\bar{h}^{n}(t)  \end{array} \rc^{T}$ on the right,
we get
\begin{eqnarray*}
&&\cp^{n}_{r,t}-P_{0}
\\
&&\quad =  \int_{0}^{t} \lp (\cp^{n}_{r,s})^{T} B_{t}^{n}(s)^{T} + B_{r}^{n}(s)\cp^{n}_{r,s} +
\Si\Si^{T}
   -   \cp^{n}_{r,s}  \lc H_{n}(s,s) \quad I \rc^{T}\lc  H_{n}(s,s) \quad I \rc     (\cp^{n}_{t,s})^{T}  \rp ds ,
\end{eqnarray*}
where the coefficients $B^{n} $ and $\Si$ are defined by
\begin{eqnarray*}
B_{r}^{n}(s) = \lc\begin{array}{c} a(s) \\ L_{n}(r,s)  \end{array}\rc  [I_{m}\quad 0] ,
\quad
\Si=\lc\begin{array}{c} I_{m}\\ 0 \end{array}\rc
.
\end{eqnarray*}
Sending $n \to  \infty$ on both sides of the equation and applying Lemma \ref{lem.gau} we obtain   equation~\eref{eqn.vol.P} for $\cp_{r,t}$.
 The equation \eref{eqn.vol.V} for $\hat{V}_{r,t}$ can be derived in a  similar way by multiplying~\eref{eqn.Unhat} by the proper factor given by \eref{eqn.Vn}.  This completes the proof.
\end{proof}


We now give some details about our auxiliary result needed in the proof of Theorem \ref{thm.vol}.
Let us now consider the following auxiliary result:
\begin{lemma}\label{lem.gau}
Let $(x , y)$, $( \tilde{x}, y)$, $(x , y_n )$ and $(\tilde{x}, y_n)$ be joint Gaussian random vectors, and suppose that $  y_n $ converges to $  y  $ in $L^2(\Omega)$. Then

 (i) $\mathbb{E}  [ x | y_n  ] \rightarrow \mathbb{E}  [ x | y   ] $  in $L^2(\Omega)$;

 (ii) $ \mE   [   \mathbb{E}  ( x  | y_n   ) \mathbb{E}  ( \tilde{x} | y_n   )    ] \rightarrow   \mE   [   \mathbb{E}  ( x  | y    ) \mathbb{E}  ( \tilde{x} | y    )    ] $.
\end{lemma}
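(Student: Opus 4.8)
The plan is to exploit the fact that for jointly Gaussian vectors the conditional expectation is simply the linear projection onto the observation, so that everything reduces to explicit linear-algebra formulas in the covariance matrices. Specifically, for a jointly Gaussian pair $(x, y_n)$ with $y_n$ centered (we may assume mean zero after subtracting means, which do not affect the limit argument), one has $\mE[x \mid y_n] = \mE[x y_n^T]\,(\mE[y_n y_n^T])^{-1} y_n$ when $\mE[y_n y_n^T]$ is invertible. The strategy is thus to write each conditional expectation through these projection coefficients, observe that the relevant covariance entries $\mE[x y_n^T]$ and $\mE[y_n y_n^T]$ converge to $\mE[x y^T]$ and $\mE[y y^T]$ as $y_n \to y$ in $L^2(\Omega)$, and then pass to the limit.

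For item (i), I would first reduce to the centered case and then treat the projection coefficient $c_n := \mE[x y_n^T]\,(\mE[y_n y_n^T])^{-1}$. The $L^2$ convergence $y_n \to y$ gives $\mE[x y_n^T] \to \mE[x y^T]$ and $\mE[y_n y_n^T] \to \mE[y y^T]$ by Cauchy--Schwarz. Provided the limiting Gram matrix $\mE[y y^T]$ is nonsingular, continuity of matrix inversion yields $c_n \to c := \mE[x y^T]\,(\mE[y y^T])^{-1}$, and then
\begin{eqnarray*}
\mE\big[\,|c_n y_n - c y|^2\,\big] \leq 2\,\mE\big[\,|(c_n - c) y_n|^2\,\big] + 2\,\mE\big[\,|c(y_n - y)|^2\,\big],
\end{eqnarray*}
both terms of which tend to zero (the first because $c_n \to c$ and $\mE[|y_n|^2]$ stays bounded, the second because $y_n \to y$ in $L^2$). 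The degenerate case, where $\mE[y y^T]$ is singular, is the point requiring care: here one should restrict to the range of the covariance (use a pseudo-inverse, or project onto a maximal linearly independent subfamily of the components of $y$) and verify that the projection $\mE[x \mid y_n]$ still converges; alternatively one invokes the abstract fact that conditioning onto an $L^2$-convergent sequence of Gaussian subspaces is continuous, which sidesteps the explicit inversion altogether.

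For item (ii), I would simply combine item (i) applied to both $x$ and $\tilde{x}$ together with the $L^2$ convergence of the two conditional expectations. Writing $A_n = \mE[x \mid y_n]$, $B_n = \mE[\tilde{x} \mid y_n]$, $A = \mE[x \mid y]$, $B = \mE[\tilde{x} \mid y]$, item (i) gives $A_n \to A$ and $B_n \to B$ in $L^2(\Omega)$; since all these are jointly Gaussian (hence have uniformly bounded $L^2$ norms along the convergent sequence), the bilinear map $(A_n, B_n) \mapsto \mE[A_n B_n^T]$ is continuous, so $\mE[A_n B_n^T] \to \mE[A B^T]$, which is exactly the claimed convergence after taking the relevant scalar entry or trace.

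The main obstacle is the possible degeneracy of the limiting covariance $\mE[y y^T]$, which obstructs the naive inversion formula in item (i); the cleanest way around it is the abstract projection viewpoint, observing that $\mE[\cdot \mid y_n]$ is orthogonal projection in $L^2(\Omega)$ onto the closed Gaussian subspace generated by $y_n$, and that $L^2$-convergence of $y_n$ forces convergence of these subspaces (and of the projections) in the strong operator topology. Once item (i) is secured in this robust form, item (ii) is a routine continuity argument, so no separate difficulty arises there.
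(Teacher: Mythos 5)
Your main argument follows the same route as the paper's own proof: both rest on the Gaussian linear-projection formula $\mE(x\mid y)=\mE[x]+\Sigma_{12}\Sigma_{22}^{-1}(y-\mE[y])$ combined with the observation that $L^{2}$ convergence of $y_{n}$ forces convergence of the means and second moments, hence of the projection coefficients. In the nondegenerate case your write-up is actually more complete than the paper's, which records only the parameter convergence and the projection formula before concluding; you additionally spell out the continuity of matrix inversion, the splitting $\mE\lc |c_{n}y_{n}-cy|^{2}\rc \leq 2\,\mE\lc |(c_{n}-c)y_{n}|^{2}\rc + 2\,\mE\lc |c(y_{n}-y)|^{2}\rc$, and the bilinear continuity argument needed for item (ii). Up to this point the proposal is correct and aligned with the intended proof.

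The genuine problem is your treatment of the degenerate case, which you correctly identify as the delicate point but then claim can be rescued. It cannot: the ``abstract fact'' you invoke --- that conditioning onto an $L^{2}$-convergent sequence of Gaussian subspaces is continuous --- is false, and the lemma itself fails when $\mE[yy^{T}]$ is singular. Take $x$ standard Gaussian, $y_{n}=x/n$ and $y=0$. Then $y_{n}\to y$ in $L^{2}(\Omega)$ and all the pairs are jointly Gaussian, yet $\mE[x\mid y_{n}]=x$ for every $n$ (since $\sigma(y_{n})=\sigma(x)$), while $\mE[x\mid y]=\mE[x]=0$. The Gaussian subspace generated by $y_{n}$ is $\mathrm{span}(x)$ for all $n$ and does not converge to the trivial subspace generated by $y$; the pseudo-inverse variant collapses for the same reason, since $\mE[xy_{n}^{T}](\mE[y_{n}y_{n}^{T}])^{-1}y_{n}=x$ identically. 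So no argument can cover the singular case: invertibility of the limiting covariance $\mE[yy^{T}]$ (equivalently of $\Sigma_{22}$) is a genuine hypothesis, not a removable technicality. To be fair, the paper's own proof silently makes the same assumption --- it writes $\Sigma_{22}^{-1}$ without comment --- so the fix is not to find a cleverer projection argument but simply to state the nondegeneracy assumption (which holds in the paper's application, where the observation noise has identity covariance) rather than assert that the degenerate case can be sidestepped.
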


\begin{proof}
Denote $(x, y) \sim N(\mu , \Sigma)$, $(\tilde{x}, y) \sim N(\tilde{\mu} , \wt{\Sigma})$, $(x, y_n) \sim N(\mu^n , \Sigma^n)$,   $(\tilde{x}, y) \sim N(\tilde{\mu}^n , \wt{\Sigma}^n)$. Since $  y_n $ converges to $  y  $  in $L^2(\Omega)$, we have as $n \rightarrow + \infty$,

\begin{eqnarray*}
 \mE (y_n) \rightarrow  \mE (y) \quad \text{  and   ~ }\quad\mE \left\{    \lp
\begin{array}{c}
  x\\
     y_n
  \end{array} \rp
 \lp \begin{array}{cc} x& y_n \end{array}\rp
 \right  \}
   \rightarrow    \mE \left\{    \lp
\begin{array}{c}
  x\\
     y
  \end{array}
 \rp
 \lp \begin{array}{cc} x& y  \end{array}\rp
  \right  \} ,
 \end{eqnarray*}
which implies $ (\mu^n , \Sigma^n)  \rightarrow  (\mu  , \Sigma  ) $. On the other hand, writing 
$$\mE   \lc\lp
\begin{array}{c}
  x\\
     y
  \end{array}
\rp \rc =    \lp
\begin{array}{c}
  \mu_{1}\\
     \mu_{2}
  \end{array}
 \rp  ,  
 \quad\text{and}\quad 
 {\rm Cov}   \lc\lp
\begin{array}{c}
  x\\
     y
  \end{array}
  \rp
 \rc = \lp\begin{array}{cc} \Si_{11}&\Si_{12}\\ \Si_{21}& \Si_{22} \end{array}\rp
 ,
 $$ it is well known that
\begin{eqnarray*}
  \mE  (x | y)   = \mE[x] + \Sigma_{12} \Sigma_{22}^{-1} (y-  \mE[y]).
 \end{eqnarray*}
Taking into account the above two points, we have (i) and (ii).
\end{proof}


 \section{Application to Radar tracking}
 
 In this section we will apply the anticipative linear filter described in Section \ref{section.linear} to a standard practical problem considered in the literature. Specifically, we consider an anticipative version of the radar tracking system given in \cite[Chapter 5]{GA}. We shall observe how the algorithm induced by Theorem \ref{thm.linear} improves the estimation, versus a method using the classical Kalman filter and ignoring the anticipative problem. 
 
 In the radar tracking situation taken from \cite{GA} the signal $X$ is governed by equation \eref{eqn.linear1}, where $a(s)$ is a constant matrix. Namely, $X_{t}= [\begin{array}{cccccc} r_{t} & \dot{r}_{t} &  u^{1}_{t} & \theta_{t} & \dot{\theta}_{t} & u^{2}_{t} \end{array}]$ is a $6$-dimensional process, where $(r_{t}, \theta_{t})$ describes the position of the tracked vehicle expressed  in polar coodinates in $\RR^{2}$ ($r$ is called range and $\theta$ is called bearing in \cite{GA}). The coordinates $(u^{1}_{t}, u^{2}_{t})$ also stand for a maneuvering-correlated state noise, while $\dot{r}_{t}$ and $\dot{\theta}_{t}$ respectively represent the time derivatives for the range and the bearing.  
 

In \cite{GA} the dynamics for the process $X$ is supposed to be governed by the following equation: 
 \begin{align}\label{eqn.radar-s}
&dX_{t}  = \lc \begin{array}{cccccc}
0&1&0&0&0&0
\\
0&0&1&0&0&0
\\
0&0&\ka-1&0&0&0
\\
0&0&0&0&1&0
\\
0&0&0&0&0&1
\\
0&0&0&0&0&\ka-1
\end{array}\rc X_{t}dt +
\lc \begin{array}{cc}
 0&0
 \\
 0&0
 \\
 \si_{1} &0
 \\
 0&0
 \\
 0&0
 \\
 0&\si_{2}
\end{array}\rc
d W_{t},
\end{align}
where $\kappa$ is a real valued constant, $\si_{1}$, $\si_{2}>0$ and $W$ is a $2$-dimensional Brownian motion.  Equation \eref{eqn.radar-s} can be interpreted in the following way: we write that $r_{t}=r_{0}+\int_{0}^{t} \dot{r_{s}} ds$, 
where the velocity 
 $\dot{r}_{t} $ is equal to  $ u^{1}_{t} $ and $u^{1}_{t}$ is an Ornstein-Uhlenbeck process driven by $W^{1}_{t}$. Similar assumptions are also in order for the bearing $\theta$. 

As far as the observation process is concerned, we write equation \eref{eqn.linear} under the following form: 
\begin{align}\label{eqn.radar-o}
dZ_{t} =
\lc \begin{array}{cccccc}
\si_{\theta}^{-1}&0&0&0&0&0
 \\
0&0&0&\si_{\theta}^{-1}&0&0
\end{array}\rc X_{t}dt +
dN_{t},
\end{align}
where $\si_{\theta}$ is a positive constant and $N$ is a $2$-dimensional Brownian motion independent of~$W$.  Note that according to \eref{eqn.radar-o}, $Z^{1}_{t}$ (resp. $Z^{2}_{t}$) is a linear function of $r_{t} $ (resp. $\theta_{t}$) plus a noisy perturbation:
\begin{align*}
dZ^{1}_{t} = \si_{\theta}^{-1} r_{t} dt + dN^{1}_{t}, \quad \text{and}\quad 
dZ^{1}_{t} = \si_{\theta}^{-1} \theta_{t} dt + dN^{2}_{t}. 
\end{align*}

The anticipative nature of our system is enclosed in the following assumption. 
We assume that 
\begin{align}\label{eqn.x0}
X_{0}=\xi+\eta, \quad \text{where} \quad \eta =\ga \lc\begin{array}{cc}
 1&0\\
 0&0\\
 1&0\\
 0&1\\
 0&0\\
 0&1
\end{array}\rc N_{1} .
\end{align}
In equation \eref{eqn.x0}  the vector $\xi$ is a standard $\RR^{6}$-valued Gaussian random variable independent of   $(W, N)$, and $\gamma $ is a positive constant measuring the anticipation strength.  
Notice that according to equation \eref{eqn.x0} the anticipation of $X_{0}$ is only felt on the components  
$r_{0}$, $\theta_{0}$ and $u_{0}$ of $X_{0}$. For our numerical simulations we take  
  $\ka=0.5$, 
 $\si_{\theta}  =   0.017~{\rm rad}  $, $\si_{1}  =  103/3 $   and $\si_{2} = 1.3 $.

As the reader might expect, 
 our new filter \eref{eqn.uh}-\eref{eqn.P} provides a much better estimation   for the anticipative signal-observation system
 \eref{eqn.radar-s}-\eref{eqn.radar-o}. 
 This is attested by the   following simulation.   Namely,  in the figures below the blue curve  
  represents the signal path, the yellow curve is drawn according to the classical Kalman filter, and the orange curve is drawn thanks to our new filter. We have zoomed in the picture for comparison purposes, so that the signal curve appears to be linear.

 \includegraphics[width=8.2cm, height=7.2cm]{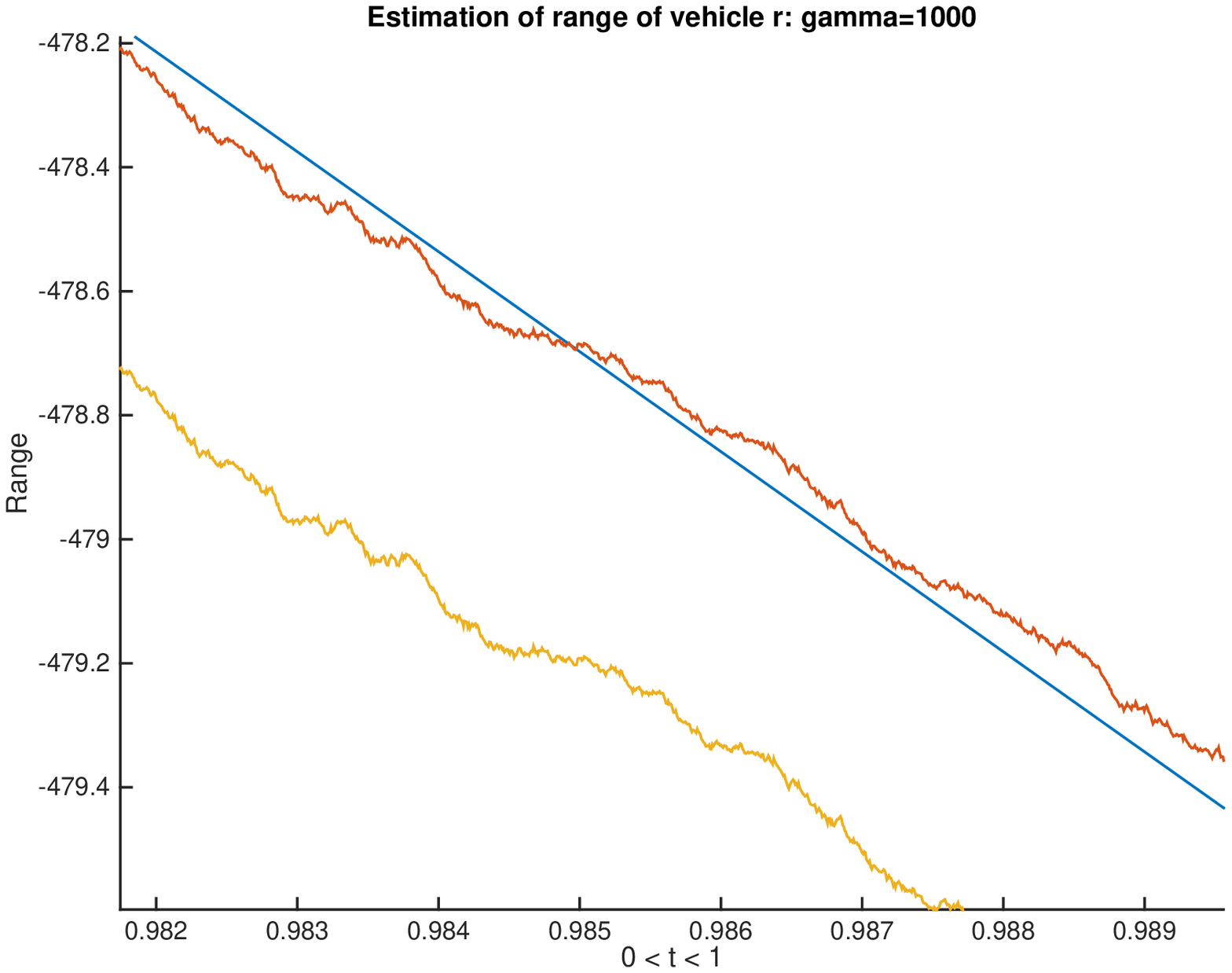}
 \includegraphics[width=8.2cm, height=7.2cm]{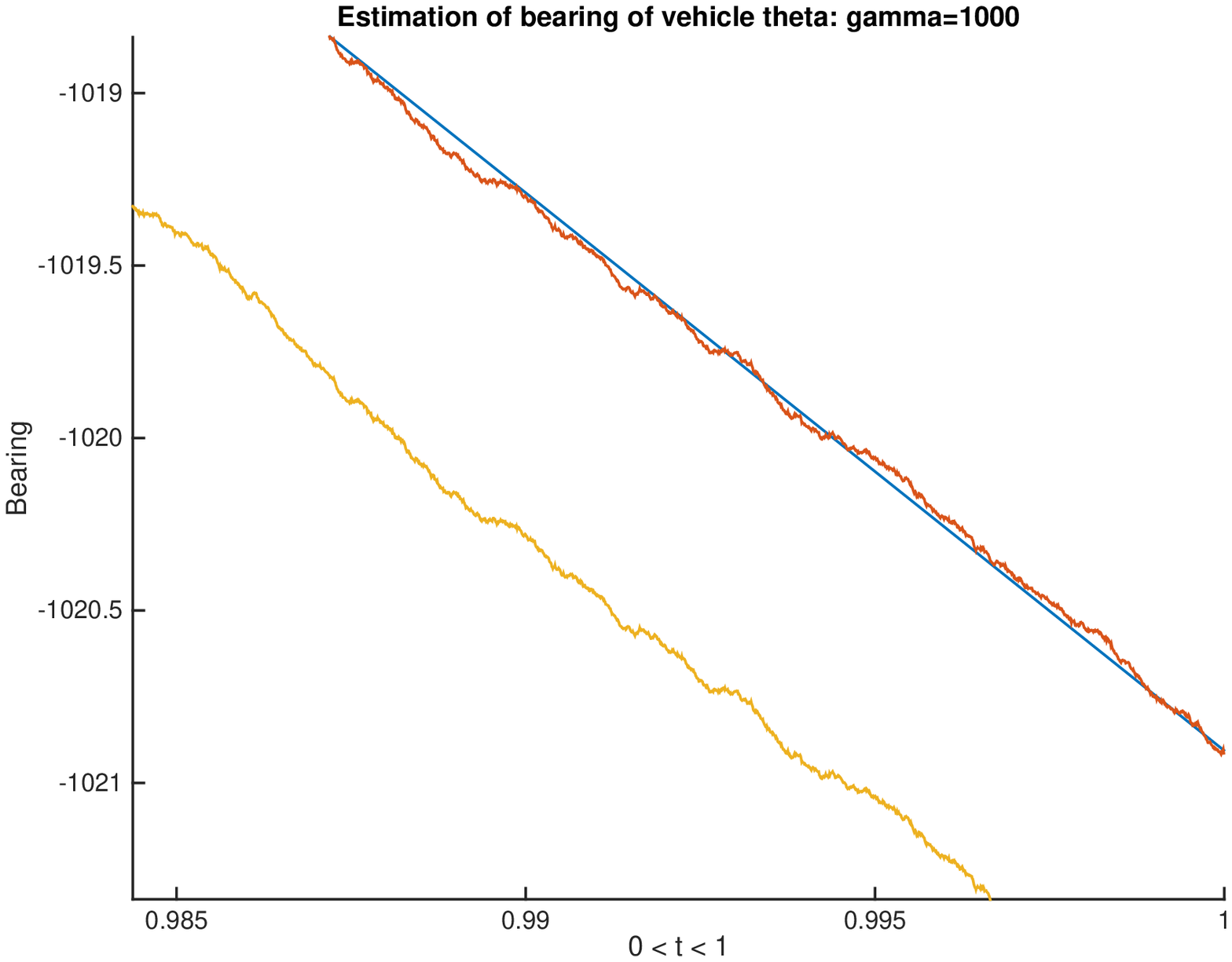}

In this set of figures we successively take   $\ga=1000$, $  100$, $10$, $1$, in order to observe the effect of the anticipation strength on the filter performance.  

 \includegraphics[width=8.2cm, height=6.4cm]{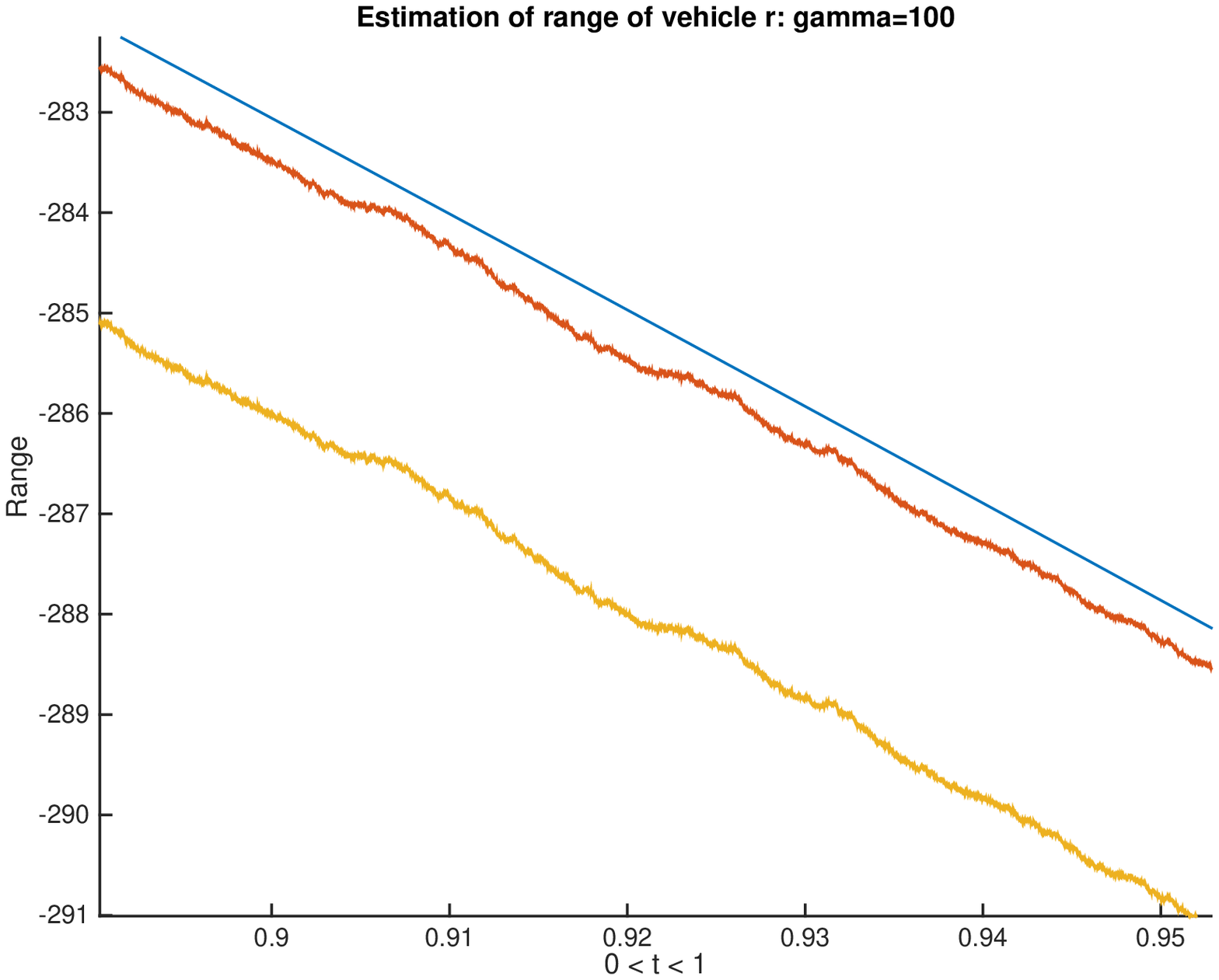}
 \includegraphics[width=8.2cm, height=6.4cm]{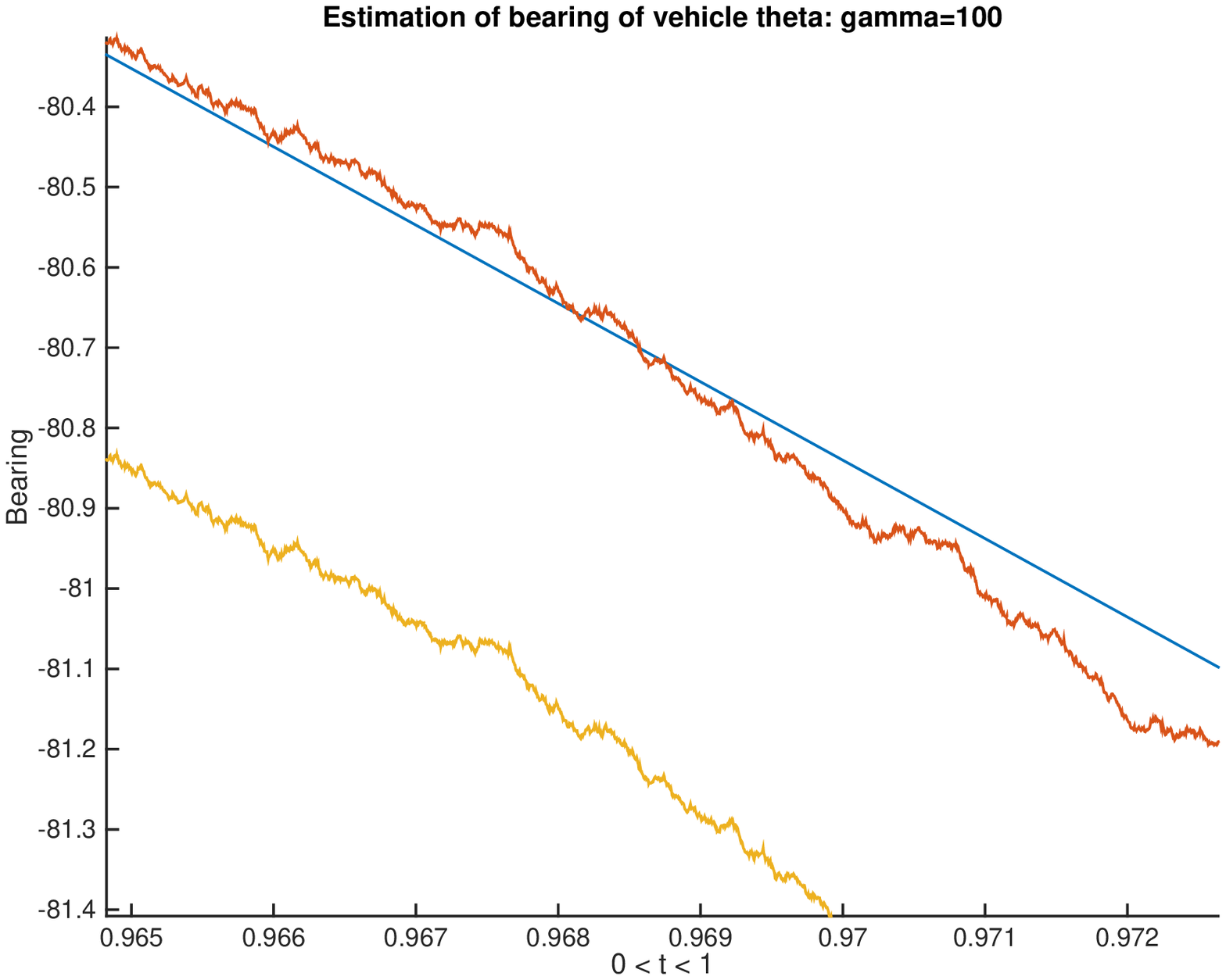}

 \includegraphics[width=8.2cm, height=6.4cm]{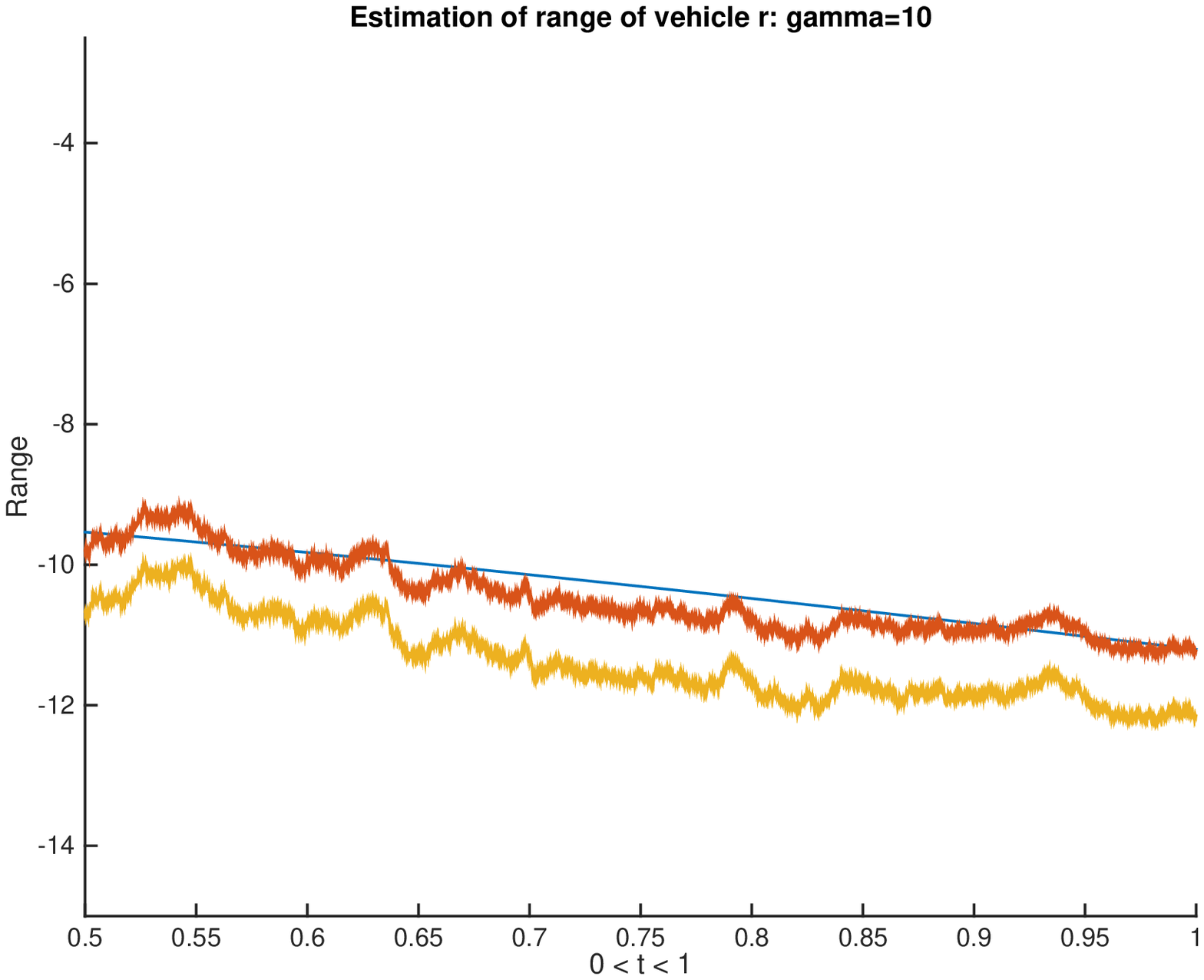}
 \includegraphics[width=8.2cm, height=6.4cm]{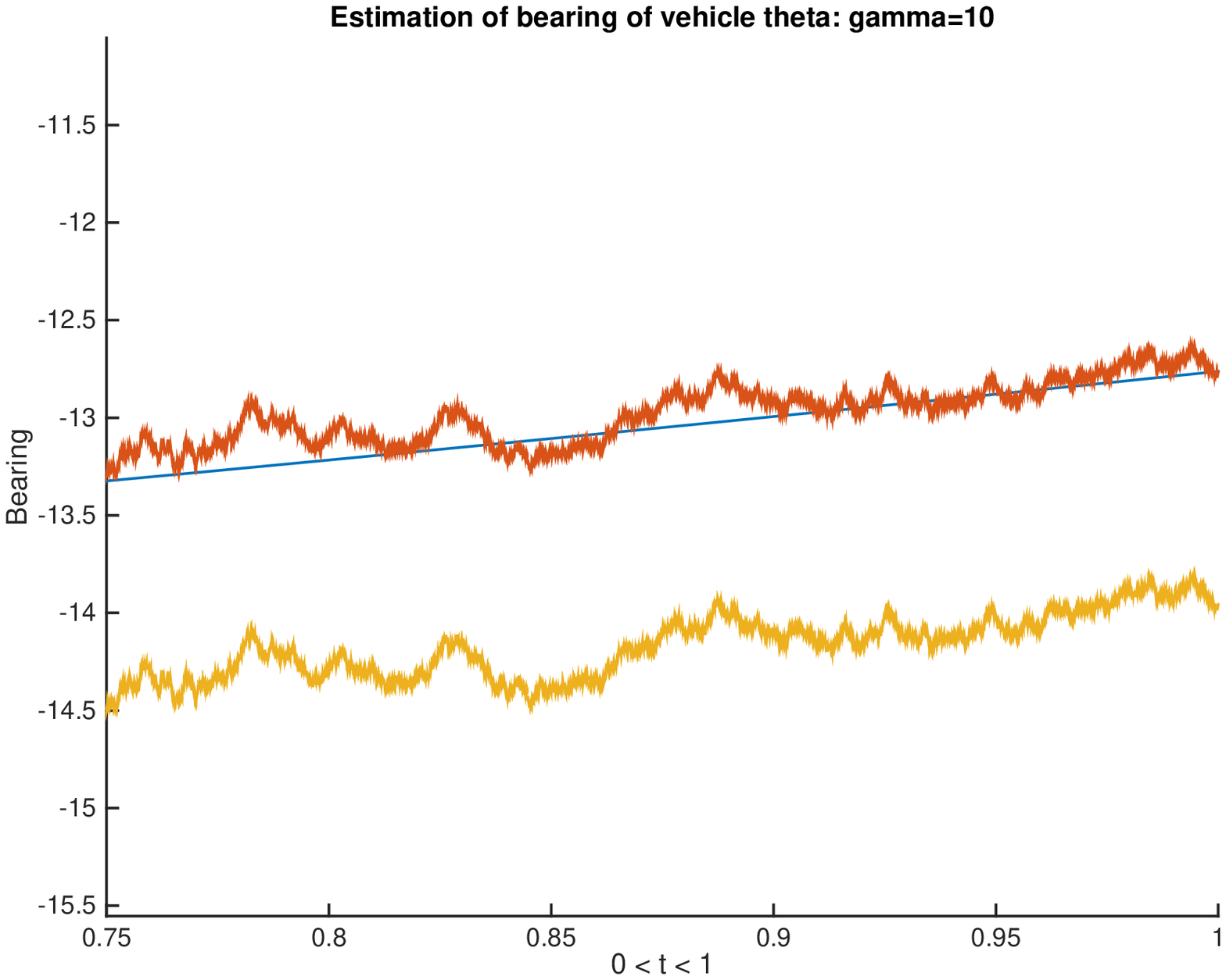}

 \includegraphics[width=8.2cm, height=7.2cm]{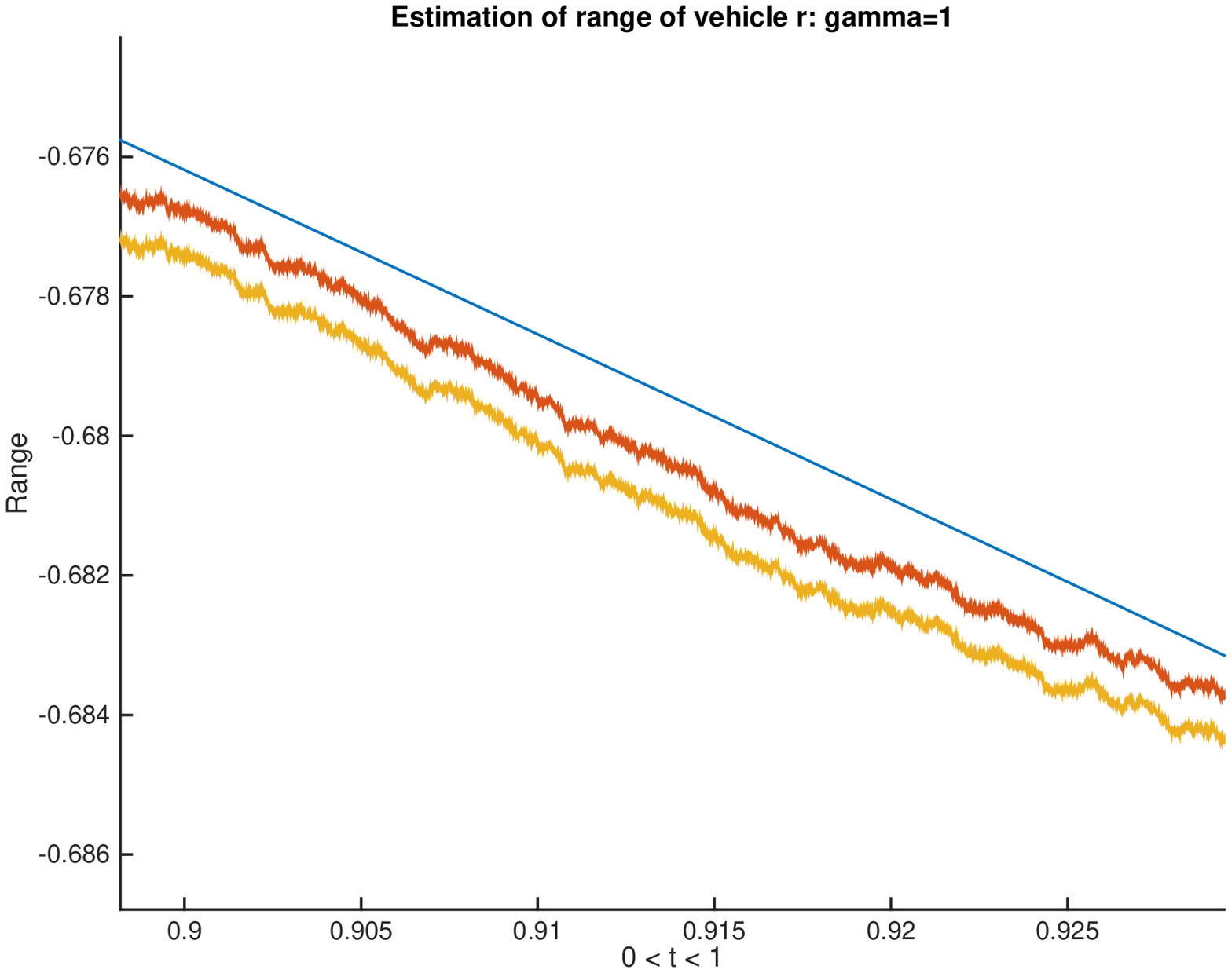}
 \includegraphics[width=8.2cm, height=7.2cm]{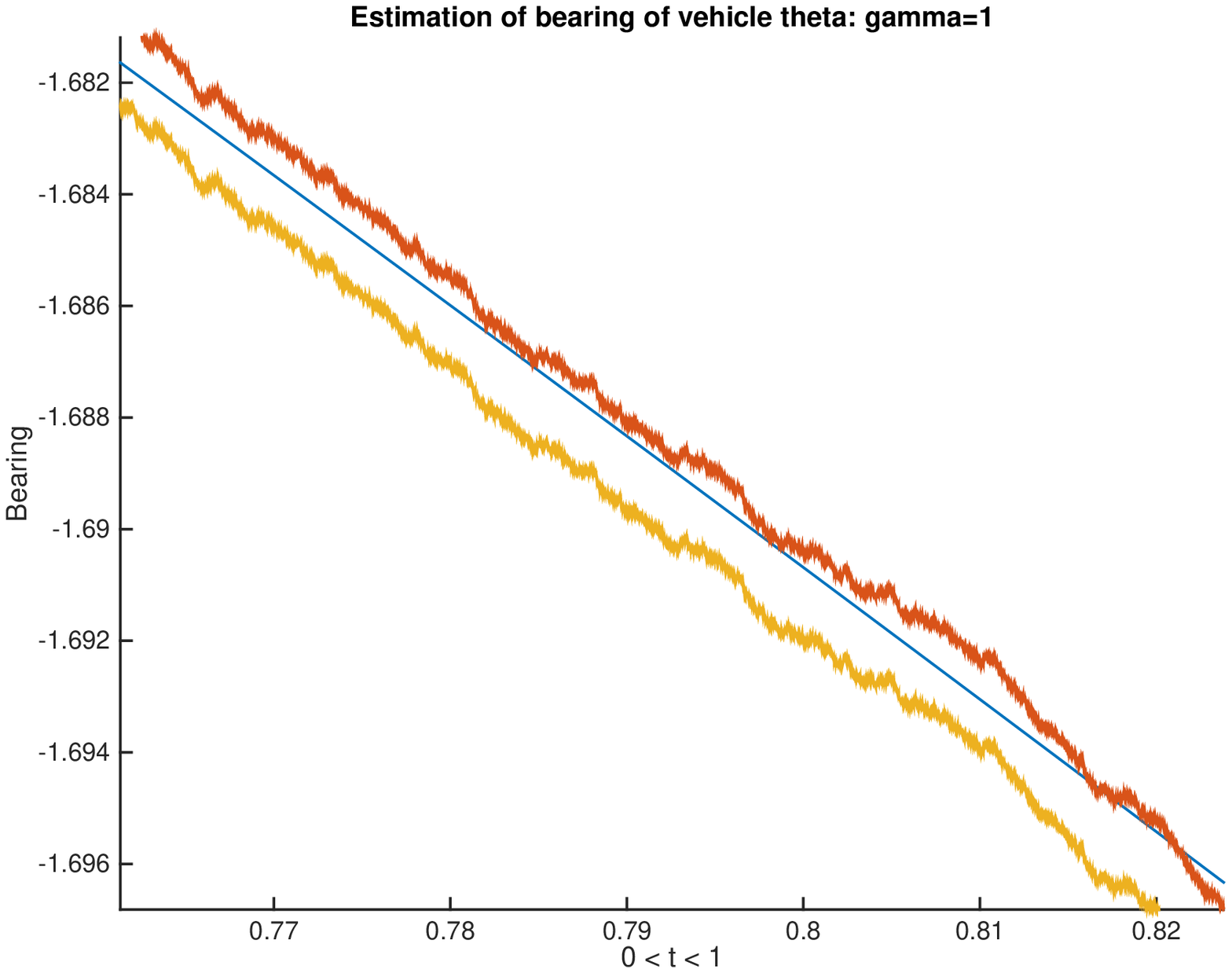}

 \begin{remark}
 As our simulation shows, the improvement of the new filter from the classical Kalman filter becomes more significant as the anticipation gets stronger.
 \end{remark}
  
Denote by $\hat{X}^{i}_{t}$, $i=1,\dots, 6$ the optimal filter, i.e. $\hat{X}^{i}_{t}=\mE(X^{i}_{t}|\cf^{Z}_{t})$, and denote by $\bar{X}^{i}_{t}$ the estimate obtained while  the anticipation is ignored. The following tables present the ratio $R_{i}$ between the error deviations of these two estimates at time $t $, that is,  
\begin{equation*}
R_{i}(t)=\lp\frac{\mE\lc |X_{t}^{i}-\hat{X}^{i}_{t}|^{2}\rc }{  \mE\lc|X_{t}^{i}-\bar{X}^{i}_{t}|^{2}\rc}\rp^{1/2} 
\end{equation*}
For $t=1$, we consider the   anticipation strength   $\ga = 1,10,100$.
  \begingroup
\setlength{\tabcolsep}{10pt} 
\renewcommand{\arraystretch}{1.5} 
  \begin{center}
   \begin{tabular}{|c|c|c|c|c|c|c|}
  \hline
t=1 &$R_{1}$&$R_2$&$ R_{3}$&$ R_{4}$&$ R_{5}$&$ R_{6}$\\ \hline  
 $\gamma=1$&$
 0.0100  $ &$  0.0361  $ &$  0.0608 $ &$   0.0141 $ &$   0.0400  $ &$  0.0616$
  \\ \hline
 $\gamma=10$&$
    0.0100 $ &$   0.0265 $ &$   0.0574   $ &$ 0.0100   $ &$ 0.0316 $ &$   0.0574$
 \\ \hline
 $\gamma=100$&$   0.0100  $ &$  0.0265 $ &$   0.0574  $ &$  0.0100  $ &$  0.0316  $ &$  0.0574$
 \\ \hline
\end{tabular}
\end{center}
\endgroup
\noindent
For $t=3/4$, we consider the anticipation strength $\gamma=1, 10, 100, 1000$. 
    \begingroup
\setlength{\tabcolsep}{10pt} 
\renewcommand{\arraystretch}{1.5} 
  \begin{center}
   \begin{tabular}{|c|c|c|c|c|c|c|}
  \hline
t=3/4 &$R_{1}$&$R_2$&$ R_{3}$&$ R_{4}$&$ R_{5}$&$ R_{6}$
  \\ \hline
 $\gamma=1$&$  0.3670  $ &$  0.3684  $ &$  0.3688  $ &$  0.3670  $ &$  0.3686 $ &$   0.3689$
 \\ \hline
 $\gamma=10$&$   0.4603 $ &$   0.4609  $ &$  0.4615 $ &$   0.4574  $ &$  0.4610  $ &$  0.4615$
 \\ \hline
 $\gamma=100$&$   0.4965  $ &$  0.4969 $ &$   0.4981  $ &$  0.4953 $ &$   0.4970 $ &$   0.4981$
 \\ \hline
 $\gamma=1000$&$   0.5007 $ &$   0.5011   $ &$ 0.5023  $ &$  0.4997 $ &$   0.5012  $ &$  0.5023$
  \\ \hline
\end{tabular}
\end{center}
As the reader can see, our ratios are small regardless of the values of $t$ and $\ga$. This indicates that our  filter performs well compared with a filter ignoring the anticipative nature of the signal.
\endgroup

\

 We end this section with a remark  on the   stability of our new filter  \eref{eqn.uh}-\eref{eqn.P}:

 \begin{remark}
 In our numerical experiments, we find that the stability of the original system \eref{eqn.linear1}-\eref{eqn.linear} and the new system \eref{eqn.nlinear} can be quite different.
 As $\eta$ gets smaller the stability of the new system usually decreases,  and therefore finer mesh is needed in the simulations in order to capture the accuracy improvement achieved by our new filter. 
  \end{remark}

\section*{Acknowledgments}
Y. Liu wish to thank Professor Yaozhong Hu and Professor David Nualart for helpful discussions. 
G. Lin gratefully acknowledges the support from National Science Foundation (DMS-1555072, DMS-1736364, and DMS-1821233).
S. Tindel gratefully acknowledges the support from National Science Foundation DMS-1613163.


\end{document}